\documentclass[11pt,leqno]{amsart}
\usepackage{amssymb,amsmath,amsthm,soul,color}
\usepackage[normalem]{ulem}
\usepackage{amssymb,amsmath,amsthm}
\usepackage{mathrsfs}
\usepackage[utf8]{inputenc}
\usepackage{todonotes}
\usepackage[left=3cm, right=3cm, top=2.5cm, bottom=2.5cm]{geometry}
\usepackage{url}
\usepackage[numbers,sort&compress]{natbib}
\usepackage{fixmath}

\usepackage[T1]{fontenc}
\usepackage{microtype}

\usepackage[colorlinks=true,urlcolor=blue,
citecolor=red,linkcolor=blue,linktocpage,pdfpagelabels,
bookmarksnumbered,bookmarksopen]{hyperref}

\usepackage{xcolor,cancel}

\newtheorem{Th}{Theorem}[section]

\newtheorem{Lem}[Th]{Lemma}

\newcommand{\eps}{\varepsilon}

\newcommand{\R}{\mathbb{R}}

\newcommand{\Z}{\mathbb{Z}}

\newcommand{\cC}{{\mathcal C}}
\newcommand{\cD}{{\mathcal D}}

\newcommand{\cI}{{\mathcal I}}
\newcommand{\cJ}{{\mathcal J}}

\newcommand{\cN}{{\mathcal N}}

\newcommand{\weakto}{\rightharpoonup}

\numberwithin{equation}{section}

\DeclareMathOperator*{\essinf}{ess\,inf}

\newcommand{\supp}{\mathrm{supp}\,}

\newcommand{\loc}{\mathrm{loc}}

\begin{document}
\title{Nonlinear Schr\"odinger equations with a critical, inverse-square potential}

\author[B. Bieganowski]{Bartosz Bieganowski}
\author[A. Konysz]{Adam Konysz}
\author[S. Secchi]{Simone Secchi}

\address[B. Bieganowski]{\newline\indent
	Faculty of Mathematics, Informatics and Mechanics, \newline\indent
	University of Warsaw, \newline\indent
	ul. Banacha 2, 02-097 Warsaw, Poland}
\email{\href{mailto:bartoszb@mimuw.edu.pl}{bartoszb@mimuw.edu.pl}}

\address[A. Konysz]{\newline\indent
	Faculty of Mathematics and Computer Science, \newline\indent
	Nicolaus Copernicus University, \newline\indent
	ul. Chopina 12/18, 87-100 Toruń, Poland}
\email{\href{mailto:adamkon@mat.umk.pl}{adamkon@mat.umk.pl}}

\address[S. Secchi]{\newline\indent
	Dipartimento di Matematica e Applicazioni, \newline\indent
	Università degli Studi di Milano Bicocca, \newline\indent
	via R. Cozzi 55, I-20125 Milano, Italy}
\email{\href{mailto:simone.secchi@unimib.it}{simone.secchi@unimib.it}}

\begin{abstract}
    We study the existence of solutions of the following nonlinear Schr\"odinger equation
$$
-\Delta u+V(x)u-\frac{(N-2)^2}{4|x|^2}u=f(x,u)
$$
where $V:\R^N\to\R$ and $f:\R^N\times \R\to \R$ are periodic with respect to $x\in\R^N.$ We assume that $V$ has positive essential infimum, $f$ satisfies weak growth conditions and $N\geq 3.$ The approach to the problem uses variational methods with nonstandard functional setting. We obtain the existence of the ground state solution using the new profile decomposition.
\\
\noindent \textbf{Keywords:} variational methods, singular potential, nonlinear Schr\"odinger equation,
   
\noindent \textbf{AMS Subject Classification:} 35Q60, 35Q55, 35A15, 35J20, 58E05
\end{abstract}

\maketitle

\section{Introduction}
In this paper we are interested in finding so called {\em standing waves} for nonlinear Schr\"odinger equation with inverse-square potential with critical coefficient:
\begin{equation}\label{eq:Schrodinger}
    i\frac{\partial\Psi}{\partial t}=-\Delta_x\Psi +\left(V(x)+\lambda\right)\Psi - \frac{(N-2)^2}{4|x|^2}\Psi -g(x,|\Psi|)\Psi,\quad (t,x)\in\R\times(\R^N \setminus \{0\}),
\end{equation}
where $i^2=-1$ and $N\geq 3.$

Looking for solution of the form $\Psi(t,x)=e^{-i\lambda t}u(x)$ leads us to the following equation:
\begin{equation}\label{eq:main}
    -\Delta u  +V(x)u - \frac{(N-2)^2}{4|x|^2}u =f(x,u), \quad x \in \R^N \setminus \{0\}
\end{equation}
where we assume that $V\colon \R^N\to\R$ and $f:\R^N\times\R$ are periodic with respect to $x\in\R^N$ and $N\geq 3.$ Moreover we assume that $f$ has superlinear and subcritical growth. The Schr\"odinger equation plays an important role in many physical models. For example, in nonlinear optics it describes the propagation of light through optical structures with periodic structure (such as photonic crystals). In the presence of defects in the material, the inverse-square potential (Hardy-type) $\frac{\mu}{|x|^2}$ appears in the equation. Such singular potentials also arise in many other areas as quantum mechanics, nuclear and molecular physics and even quantum cosmology. The reader who would like to know more about physical motivations of Hardy-type potentials may check \cite{FrankLand,FelliTerracini, FelliMarchini} and references therein.

In the mathematical perspective the inverse-square potentials were widely investigated by many authors. In most of the works the Hardy type potential is controlled by a constant $\mu<\frac{(N-2)^2}{4}.$ For instance such subcritical problem was investigated by Guo and Mederski in \cite{GuoMederski} in a strongly indefinite case. Other exemplary contributions with subcritical nonlinearity are due to Li, Li and Tang in \cite{LiLiTang} with Berestycki-Lions type conditions, as well as Zhang and Zhang in \cite{Zhang} for the system of Schr\"odinger equations. For critical nonlinearity, i.e., nonlinearity with growth with power equal to the critical Sobolev exponent $2^*:=\frac{2N}{N-2}$ there are results by Deng, Jin, Peng \cite{DengJinPeng}, Felli and Pistoia \cite{FelliPistoia}, Smets in \cite{Smets} where he obtained nonexistence of solutions, and Terracini in \cite{Terracini}. Critical case when - as in \eqref{eq:main} - the critical constant $\mu=\frac{(N-2)^2}{4}$ introduces many additional difficulties. The natural norm for our problem is not equivalent to the classical one in $H^1(\R^N),$ which motivates the definition of the space $X^1(\R^N)$ to be the completion of $H^1(\R^N)$ under the natural norm to our problem. A crucial difficulty is that $X^1(\R^N)$ is not invariant under the translations, which makes compactness arguments significantly more delicate. The above mentioned nonstandard functional space embeds continuously into $H^s(\R^N)$ for every $s\in (0,1).$ This embedding is due to the inequality proven by Frank in \cite[Theorem 1.2]{Frank}, where the space in the context of partial differential equations with Hardy-type potentials was firstly defined independently by Suzuki in \cite{Suzuki} and by Trachanas and Zographopulos in \cite{TrachanasZographopoulos}. Later, Mukherjee, Nam, and Nguyen also defined and used this space in \cite{Tai}. Most recently, the Berestycki–Lions scalar-field equation in this setting was studied by Bieganowski and Strzelecki \cite{BBDS}.

In this paper the nonlinearity satisfies superlinear and subcritical growth conditions, see (F1)-(F3) below. The idea for generating Cerami sequences is based on the use of \cite[Theorem 5.1]{BB}, in which the Nehari manifold appears and the assumption (F4) is classical for such methods.

In this setting, neither the Palais-Smale nor Cerami compactness condition is satisfied, so compactness must be recovered by diffrent means. The classical method for obtaining strong convergence of the Cerami and Palais-Smale sequences is the so-called {\em profile decomposition,} which was introduced by G\'erard \cite{Gerard} and Nawa \cite{Nawa}. However, these theorems cannot be applied in our problem due to the fact that singular potential is not translation-invariant. Therefore, inspired by~\cite{BBDS}, we establish a new profile decomposition adapted to the non-translation-invariant structure of $X^1(\R^N)$. This part is particularly technically complicated since we show that any possible weak limit of a translated Cerami sequence must be a critical point of the limiting functional (without the singular potential) defined only on $H^1(\R^N)$.
By comparing energies as in \cite{GuoMederski}, we exclude such nontrivial limits, which ultimately forces strong convergence.

In what follows through the paper we assume the following conditions (the notation $A \lesssim B$ should be understood as $A \leq C \cdot B$, for some suitable constant $C$ which is independent of $A$ and $B$):
\begin{enumerate}
    \item[(V)] $V\in L^\infty(\R^N),$ $V$ is $\Z^N$-periodic in $x\in\R^N$ and $V_0 := \essinf_{x \in \R^N} V(x) > 0$.
    \item[(F1)] $f\colon \R^N\times\R\to\R$ is Carath\'{e}odory function that is $\Z^N$-periodic in $x\in\R^N$ and satisfies
    $$
    |f(x,u)|\lesssim 1+|u|^{p-1} \hbox{ for all } u\in\R,x\in\R^N
    $$
    for some $p \in \left(2, 2^* \right)$.
    \item[(F2)]$f(x,u)=o(u)$ as $u\to0$ uniformly with respect to $x$.
    \item[(F3)] $F(x,u)/u^2\to\infty$ uniformly in $x$ as $|u|\to\infty$, where $F$ is the usual antiderivative of $f$ with respect to $u$, namely $F(x,u)=\int_0^u f(x,s)\,ds$.
    \item[(F4)] $u\mapsto \frac{f(x,u)}{|u|}$ is nondecreasing on $(-\infty,0)$ and on $(0,\infty)$.
\end{enumerate}

Our main result is the following theorem.

\begin{Th}\label{th:main}
Suppose that (V), (F1)--(F4) hold. Then, there exists a ground state solution to \eqref{eq:main}.
\end{Th}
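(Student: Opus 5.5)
We work in $X^1(\R^N)$, the completion of $H^1(\R^N)$ under
$$
\|u\|^2=\int_{\R^N}|\nabla u|^2+V(x)u^2-\frac{(N-2)^2}{4|x|^2}u^2\,dx .
$$
By (V) and Frank's inequality \cite[Theorem 1.2]{Frank}, $X^1(\R^N)\hookrightarrow H^s(\R^N)$ for every $s\in(0,1)$. Choosing $s$ close to $1$ so that $p<2N/(N-2s)$, (F1)--(F2) make $u\mapsto\int F(x,u)$ of class $\cC^1$ on $X^1(\R^N)$. The energy is
$$
\cJ(u)=\tfrac12\|u\|^2-\int F(x,u)\,dx .
$$
We also use the limiting functional on $H^1(\R^N)$,
$$
\cJ_\infty(u)=\tfrac12\int |\nabla u|^2+Vu^2\,dx-\int F(x,u)\,dx ,
$$
and the Nehari manifolds $\cN$ and $\cN_\infty$. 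The ground state level is $c=\inf_\cN\cJ$.

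\textbf{Step 1: mountain pass structure and a Cerami sequence at level $c$.} From (F2) we have $\cJ(u)\ge\alpha>0$ on a small sphere. From (F3), $\cJ(tu)\to-\infty$ as $t\to\infty$. From (F4), each ray meets $\cN$ in exactly one point, and $t\mapsto\cJ(tu)$ is maximized there. These are the hypotheses of \cite[Theorem 5.1]{BB}, which yields a Cerami sequence $(u_n)$ at level
$$
c=\inf_\cN\cJ=\inf_{u\neq0}\max_{t>0}\cJ(tu)>0 .
$$
Boundedness of $(u_n)$ follows in the standard way under (F3)--(F4). Suppose $\|u_n\|\to\infty$ and set $v_n=u_n/\|u_n\|$. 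If $v_n$ vanishes in the sense of Lions (which applies via the $H^s$ embedding), then $\int F(x,tv_n)\to0$, so $\cJ(tv_n)\to t^2/2$. Since $\cJ(tv_n)\le\cJ(u_n)+o(1)$ by (F4), this contradicts $\cJ(u_n)\to c$. If $v_n$ does not vanish, translate by $y_n\in\Z^N$; the limit is nonzero, and (F3) with Fatou forces $\cJ(u_n)/\|u_n\|^2\to-\infty$, again a contradiction. Translations are not isometries of $X^1$, but local $L^2$ mass is what matters here, so this step should still go through.

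\textbf{Step 2: weak limit and the dichotomy.} Take $u_n\weakto u$ in $X^1$. Then $u$ is a critical point, by local compactness in $L^q_{\loc}$ for $q<2N/(N-2s)$. If $u\ne0$, then $u\in\cN$ and
$$
\cJ(u)=\cJ(u)-\tfrac12\cJ'(u)(u)=\int\Big(\tfrac12 f(x,u)u-F(x,u)\Big)dx .
$$
By (F4) the integrand is nonnegative, so Fatou gives $\cJ(u)\le\liminf\big(\cJ(u_n)-\tfrac12\cJ'(u_n)(u_n)\big)=c$. Hence $u$ is a ground state.

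Now assume $u=0$. The sequence cannot vanish, since vanishing would give $\int f(x,u_n)u_n\to0$, hence $\|u_n\|\to0$ and $c=0$, which is false. So there are $y_n\in\Z^N$ with $|y_n|\to\infty$ and $u_n(\cdot+y_n)\weakto w\ne0$. This is the profile decomposition step, and I expect it to be the main obstacle.

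\textbf{Step 3: the translated profile.} We must show that $w\in H^1(\R^N)$ and $\cJ_\infty'(w)=0$, even though $u_n(\cdot+y_n)$ is only controlled in $X^1$ and the Hardy term is not translation invariant. The plan is to localize. Take $\varphi\in\cC_c^\infty$ and cutoffs $\chi$ supported away from the origin. On $\supp\chi(\cdot-y_n)$ the potential $|x|^{-2}$ is $O(|y_n|^{-2})$, so the $X^1$ norm controls $\|\nabla(\chi(\cdot-y_n)u_n)\|_2$ uniformly. The gradient of $w$ on every ball is then bounded, and exhausting $\R^N$ gives $w\in H^1$. Testing $\cJ'(u_n)$ against $\varphi(\cdot-y_n)$, the Hardy contribution tends to $0$, and we obtain $\cJ_\infty'(w)(\varphi)=0$. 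A subtle point is that the quadratic form of $X^1$ is only conditionally defined near $0$, so we work with the ground-state representation $u=|x|^{-(N-2)/2}v$ only near the origin and with the honest $H^1$ form far away.

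\textbf{Step 4: energy comparison.} Since $w\in\cN_\infty$, Fatou and periodicity give $\cJ_\infty(w)\le c$, as in Step 2. On the other hand, for any $v\in H^1\setminus\{0\}$ the Hardy term is strictly negative, so $\|v\|^2<\|v\|_{H^1_V}^2$ and
$$
\max_t\cJ(tv)<\max_t\cJ_\infty(tv).
$$
Applying this with $v=w$ gives
$$
c\le\max_t\cJ(tw)<\cJ_\infty(w)\le c ,
$$
a contradiction, as in \cite{GuoMederski}. Hence $u\neq0$, and by Step 2 it is a ground state.
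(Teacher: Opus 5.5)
Your proof is correct, but it reaches the theorem by a genuinely different and shorter route than the paper.

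\textbf{How the two routes differ.} The paper proves a full profile decomposition for bounded Cerami sequences in $X^1(\R^N)$. It extracts translated profiles iteratively and shows the procedure stops. It then proves $u_n-\overline{u}_0-\sum_i\overline{u}_i(\cdot-x_n^i)\to0$ in $X^1(\R^N)$, which requires killing Hardy cross terms like $\int\overline{u}_i(\cdot-x_n^i)u_n|x|^{-2}\,dx$ through the exponential decay of the profiles. It obtains the exact energy splitting via Brezis--Lieb. Finally it excludes $m\ge1$ by comparing $c$ with the limiting level $c_0=\inf_{\cN_0}\cJ_0$, which it needs to be attained.

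You use instead the classical one-profile dichotomy. Fatou's lemma applied to the nonnegative integrand $\frac12f(x,u)u-F(x,u)$ gives $\cJ(u)\le c$ for a nonzero weak limit. Via periodicity it also gives $\cJ_\infty(w)\le c$ for a nonzero translated limit $w$, and this is contradicted directly by $c\le\max_{t}\cJ(tw)<\cJ_\infty(w)$.

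\textbf{What each buys.} Your route needs only one profile and only an inequality for its energy. It uses no cross-term or splitting estimates and no existence result for the limiting problem. What it does not give is the structural information that the paper's decomposition provides: strong convergence and a complete description of Cerami sequences in the non-translation-invariant space $X^1(\R^N)$, which is reusable at other levels.

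\textbf{The regularity step needs more justification.} You show $w\in H^1(\R^N)$ by uniform local gradient bounds rather than the paper's Weyl lemma plus Caccioppoli argument. That approach is fine, but the reason you give is incomplete. Smallness of $|x|^{-2}$ on $\supp\chi(\cdot-y_n)$ only controls the Hardy term of $\chi(\cdot-y_n)u_n$. It does not by itself show that $\|u_n\|$ controls the localized gradient, since the negative Hardy term near the origin could in principle offset a large gradient elsewhere.

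What you actually need is a localization inequality. For example, use the IMS formula $Q(u)=Q(\chi_1u)+Q(\chi_2u)-\int(|\nabla\chi_1|^2+|\nabla\chi_2|^2)u^2\,dx$ for $\chi_1^2+\chi_2^2=1$, where $Q$ is the Hardy quadratic form, together with $Q(\chi_2u)\ge0$ from Hardy's inequality. This is easiest after replacing the Cerami sequence by one in $H^1(\R^N)$, as the paper does. You also need cut-offs with $|\nabla\chi_R|\lesssim1/R$, so that the bound is uniform in $R$ when you exhaust $\R^N$.

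\textbf{A minor point.} (F4) only says $f(x,u)/|u|$ is nondecreasing, so a ray may meet $\cN$ in a segment rather than a single point. This is harmless, since Theorem~\ref{Thm:Crtical_point} needs only (J1)--(J3).
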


The paper is organized as follows. In Section~2 we introduce the functional framework associated with \eqref{eq:main} and recall several preliminary results in the space $X^1(\R^N).$ Section~3 is devoted to the variational construction of Cerami sequences. In Section~4 we establish the new profile decomposition adapted to the structure of $X^1(\R^N).$ Finally, in Section~5 we complete the proof of Theorem \ref{th:main} by showing the strong convergence of the Cerami sequence and verifying that the limit is a ground state.

In what follows, $|\cdot|_k$ denotes the usual $L^k$-norm.

\section{Variational and functional setting}
In view of the Hardy inequality
\begin{equation}\label{ineq:Hardy}
    \int_{\R^N} |\nabla u|^2 \, dx\geq \frac{(N-2)^2}{4}\int_{\R^N}\frac{u^2}{|x|^2} \, dx,
\end{equation}
with optimal constant $\frac{(N-2)^2}{4}$, one can think of equipping $H^1(\R^N)$ with equivariant norm as in \cite{GuoMederski}. However as the optimal constant in \eqref{ineq:Hardy} appears in \eqref{eq:main}, it is not possible, i.e. we loose completeness of the space. Having that in mind as an energy space we choose
$$
X^1(\R^N):=\overline{H^1(\R^N)},
$$
where the closure is taken with respect to the norm
$$
\|u\|^2_{X^1}:=\int_{\R^N} \left( |\nabla u|^2-\frac{(N-2)^2}{4|x|^2}u^2+u^2 \right)\,dx.
$$
The norm is then characterized by (see \cite{TrachanasZographopoulos}) 
\begin{align*}
\|u\|^2_{X^1} &=[u]^2 + |u|_2^2, \\
[u]^2 &:= \lim_{\eps\to 0^+}\left(\int_{|x|>\eps} |\nabla u|^2 \, dx - \int_{|x| > \varepsilon}\frac{(N-2)^2}{4|x|^2}u^2\,dx-\frac{N-2}{2} \eps^{-1} \int_{|x|=\eps}u^2\,dS\right).
\end{align*}
Moreover, $X^1 (\R^N)$ is a Hilbert space, with the scalar product given by
\begin{multline*}
\langle u, v \rangle_{X^1} := \\
\lim_{\eps\to 0^+}\left(\int_{|x|>\eps} \nabla u \nabla v \, dx - \int_{|x| > \varepsilon}\frac{(N-2)^2}{4|x|^2}uv\,dx-\frac{N-2}{2} \eps^{-1} \int_{|x|=\eps}uv\,dS\right) + \int_{\R^N} uv \, dx.
\end{multline*}
From \cite{Frank}, we have the following chain of inclusions:
\begin{equation}\label{eq:embeddings}
H^1(\R^N)\hookrightarrow X^1 (\R^N) \hookrightarrow H^s(\R^N) \hbox{ for every }s\in[0,1)
\end{equation}
In particular,
$$
X^1 (\R^N) \hookrightarrow L^t (\R^N)
$$
for $t \in [2,2^*)$, and
$$
X^1 (\R^N) \hookrightarrow \hookrightarrow L^t_{\mathrm{loc}} (\R^N)
$$
for $t \in [2, 2^*)$.

For our purposes, we prefer to introduce a new norm
\begin{equation}\label{norm_formula}
\|u\|^2 := [u]^2 + \int_{\R^N} V(x) u^2 \, dx,
\end{equation}
which, thanks to (V), is equivalent with $\| \cdot \|_{X^1}$. Moreover, by $\langle \cdot, \cdot \rangle$ we will denote the corresponding scalar product on $X^1 (\R^N)$. Let us define the energy functional $\cJ \colon X^1(\R^N)\to\R$ by
$$
\cJ(u):= \frac12 \|u\|^2 -\int_{\R^N}F(x,u)\,dx,
$$
which is of class $\cC^1$ and its critical points correspond to weak solutions of \eqref{eq:main}, see e.g. \cite{BBDS}. 

Before proceeding, we record the following elementary inequalities, see for instance \cite{SzulkinWeth}.
\begin{Lem}\label{ineq:f}
Under assumptions (F1)--(F4), 
$$
    f(x,u)u\geq 2 F(x,u)\geq 0.
$$
Moreover, for every $\eps>0$ there exist $C_\eps>0$ such that
$$
|f(x,u)|\leq \eps |u|+C_\eps |u|^{p-1}.
$$
\end{Lem}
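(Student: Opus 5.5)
We prove the two assertions separately, treating $u>0$ and $u<0$ symmetrically and fixing $x$ throughout. Since $f$ is Carath\'eodory and satisfies (F1), for a.e.\ $x$ the map $s\mapsto f(x,s)$ is continuous. Hence $F(x,\cdot)$ is $C^1$ with $\partial_u F(x,u)=f(x,u)$, and we may use the fundamental theorem of calculus freely.

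\emph{First inequality.} We begin by fixing the sign of $f$. By (F2), $f(x,s)/|s|\to 0$ as $s\to 0^\pm$. Take $u>0$. By (F4) the function $s\mapsto f(x,s)/s$ is nondecreasing on $(0,\infty)$, so for $0<s\le u$ we get
\[
\frac{f(x,s)}{s}\ \ge\ \lim_{t\to0^+}\frac{f(x,t)}{t}=0 .
\]
Thus $f(x,s)\ge 0$ for $s>0$.

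Now take $u<0$. Here (F4) says that $s\mapsto f(x,s)/|s|$ is nondecreasing on $(-\infty,0)$. For $s<0$ this gives
\[
\frac{f(x,s)}{|s|}\ \le\ \lim_{t\to0^-}\frac{f(x,t)}{|t|}=0 ,
\]
so $f(x,s)\le 0$ for $s<0$. In both cases $sf(x,s)\ge0$. Writing $F(x,u)=\int_0^u f(x,s)\,ds$, we conclude $F(x,u)\ge0$ for all $u$.

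Next we compare $F$ with $\tfrac12 f(x,u)u$. For $u>0$, monotonicity gives $f(x,s)\le \frac{f(x,u)}{u}\,s$ for $0<s\le u$. Integrating over $(0,u)$,
\[
F(x,u)\le \frac{f(x,u)}{u}\cdot\frac{u^2}{2}=\tfrac12 f(x,u)u .
\]
For $u<0$ and $u\le s<0$, monotonicity on the negative axis gives $\frac{f(x,s)}{|s|}\ge \frac{f(x,u)}{|u|}$, that is, $f(x,s)\ge \frac{f(x,u)}{|u|}|s|$. Then
\[
F(x,u)=-\int_u^0 f(x,s)\,ds\le -\frac{f(x,u)}{|u|}\int_u^0|s|\,ds=-\frac{f(x,u)|u|}{2}=\tfrac12 f(x,u)u .
\]
Hence $f(x,u)u\ge 2F(x,u)\ge 0$.

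\emph{Second inequality.} Fix $\eps>0$. By (F2), which holds uniformly in $x$, there is $\delta>0$ such that $|f(x,u)|\le\eps|u|$ whenever $|u|\le\delta$. For $|u|\ge\delta$, (F1) gives
\[
|f(x,u)|\le C(1+|u|^{p-1})\le C\left(\delta^{1-p}+1\right)|u|^{p-1},
\]
so we may take $C_\eps=C(\delta^{1-p}+1)$. Combining the two ranges gives $|f(x,u)|\le \eps|u|+C_\eps|u|^{p-1}$ for all $u\in\R$ and $x\in\R^N$.

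The only delicate point is the sign bookkeeping on the negative half-line, because (F4) is stated for $f/|u|$ rather than $f/u$. Everything else is routine.
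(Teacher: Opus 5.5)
Your proof is correct and follows the standard argument the paper relies on; the paper gives no proof of its own and only refers to Szulkin--Weth. As there, you get the sign of $f$ from (F2) and (F4), integrate the monotonicity comparison of $f(x,s)/|s|$ with $f(x,u)/|u|$ over the segment between $0$ and $u$ (handling the negative half-line correctly), and get the $\eps$--$C_\eps$ bound by splitting at $|u|=\delta$ using (F2) and (F1). One minor point: continuity of $f(x,\cdot)$ comes from the Carath\'eodory property, not from (F1), and together with (F2) it gives $f(x,0)=0$, which you need for the range $|u|\le\delta$ to include $u=0$.
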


\section{Critical point theory}
We review an abstract theoerem proved in \cite{BB}. For a $\cC^1$-functional $\cJ$ defined on a Hilbert space $E$, we define the Nehari manifold by
$$
\cN := \left\{ u \in E \setminus \{0\} : \cJ'(u)(u) = 0 \right\},
$$
which contains all nontrivial critical points of $\cJ$.

\begin{Th}[{\cite[Theorem 5.1]{BB}}]\label{Thm:Crtical_point}
    Let $E$ be a Hilbert space. Let the functional $\cJ\colon E\to\R$ be defined by
    $$
    \cJ(u):=\frac12\|u\|^2-\cI(u),
    $$
    where $\cI \colon E \rightarrow \R$ is of $\cC^1$ class. Suppose that 
    \begin{enumerate}
        \item[(J1)] there exists a constant $r>0$ such that
            $$
            \inf_{\|u\|=r}\cJ(u)>0=\cJ(0),
            $$
        \item[(J2)] $\frac{\cI(t_nu_n)}{t_n}\to \infty$ whenever $t_n\to\infty$ in $\mathbb{R}$ and $u_n\to u\neq 0$ in $E$,
        \item[(J3)] for all $t>0$ and $u\in\cN,$ there holds 
        $$
        \frac{t^2-1}{2}\cI'(u)(u)-\cI(tu)+\cI(u)\leq 0.
        $$
\end{enumerate}
Then $\cN \neq \emptyset$ and
\begin{equation}\label{ineq:crit_pt_thm}
0 < \inf_{\|u\|=r} \cJ(u) \leq c :=  \inf_{\cN} \cJ = \inf_{u \in E \setminus \{0\}} \sup_{t \geq 0} \cJ(tu).
\end{equation}
Moreover there exists a Cerami sequence for $\cJ$ on the level $c$, i.e. a sequence $(u_n)_n \subset E$ such that
$$
\cJ(u_n) \to c, \quad (1+\|u_n\|) \cJ'(u_n) \to 0.
$$
\end{Th}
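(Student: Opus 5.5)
The plan is to prove this in three steps: first the fibering geometry, then the identification of $c$ with a mountain-pass value, and finally a deformation argument of Cerami type.

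\emph{Step 1: fibering maps.} Fix $u\neq 0$ and put $\varphi_u(t):=\cJ(tu)$ for $t\ge 0$. Apply (J2) with the constant sequence $u_n=u$. This gives $\cI(tu)/t\to\infty$, and I will use it in the form $\cI(tu)/t^2\to\infty$, i.e.\ superquadratic growth. Hence $\varphi_u(t)\to-\infty$ as $t\to\infty$; strictly, (J2) as stated only gives growth faster than $t$, so either the superquadratic form is part of the intended hypothesis or this point needs a separate check. Since $\varphi_u(0)=0$ and $\varphi_u(r/\|u\|)\ge \inf_{\|v\|=r}\cJ(v)>0$ by (J1), the function $\varphi_u$ attains a positive maximum at some $t_u>0$. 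There $\varphi_u'(t_u)=0$, i.e.\ $\cJ'(t_uu)(t_uu)=0$, so $t_uu\in\cN$ and $\cN\neq\emptyset$.

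\emph{Step 2: equality of the minimax values.} Let $u\in\cN$, so that $\|u\|^2=\cI'(u)(u)$. A direct computation gives
$$
\cJ(u)-\cJ(tu)=-\Big(\tfrac{t^2-1}{2}\cI'(u)(u)-\cI(tu)+\cI(u)\Big)\ge 0
$$
by (J3). Therefore $\sup_{t\ge0}\cJ(tu)=\cJ(u)$ for $u\in\cN$, which gives $\inf_{u\neq0}\sup_{t}\cJ(tu)\le\inf_\cN\cJ$. Conversely, for $u\neq0$, Step 1 gives $\sup_t\cJ(tu)=\cJ(t_uu)\ge\inf_\cN\cJ$. Each ray $\{tu\}_{t\ge0}$ crosses the sphere $\|v\|=r$, so $\sup_t\cJ(tu)\ge\inf_{\|v\|=r}\cJ$. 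This yields \eqref{ineq:crit_pt_thm}.

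\emph{Step 3: the Cerami sequence.} I will use the mountain-pass level
$$
c_{mp}:=\inf_{\gamma\in\Gamma}\max_{s\in[0,1]}\cJ(\gamma(s)),\qquad \Gamma=\{\gamma\in\cC([0,1],E):\gamma(0)=0,\ \cJ(\gamma(1))<0\}.
$$
By (J1), every such path crosses the sphere $\|v\|=r$, so $c_{mp}\ge\inf_{\|v\|=r}\cJ>0$. The Cerami version of the mountain pass theorem then yields a sequence with $\cJ(u_n)\to c_{mp}$ and $(1+\|u_n\|)\cJ'(u_n)\to0$. That theorem is obtained by deforming along a pseudo-gradient field rescaled by $(1+\|u\|)$, and it needs no compactness. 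Each ray, truncated at a large $T$ with $\cJ(Tu)<0$, belongs to $\Gamma$, hence $c_{mp}\le c$.

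\emph{Main obstacle.} The hard part will be the reverse inequality $c_{mp}\ge c$, i.e.\ showing that every $\gamma\in\Gamma$ satisfies $\max\cJ\circ\gamma\ge c$. My first attempt is to show that each such path meets $\cN$. Consider $g(s)=\cJ'(\gamma(s))(\gamma(s))$. Near $s=0$ one should have $g>0$ away from $0$, using (J1) together with the ray structure from Step 2 (the maximum of $\varphi_u$ lies beyond the sphere $\|v\|=r$). At $s=1$ one should have $g<0$: Step 2 applied along the ray through $\gamma(1)$ shows $\varphi_{\gamma(1)}$ is decreasing after $t_{\gamma(1)}$, and $\cJ(\gamma(1))<0<\cJ(t_{\gamma(1)}\gamma(1))$ forces $1>t_{\gamma(1)}$. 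By continuity, $g$ vanishes at some $s$ with $\gamma(s)\neq0$, so $\gamma(s)\in\cN$ and $\max\cJ\circ\gamma\ge c$.

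If sign control of $g$ fails without strict monotonicity in (J3), I would switch to a different route. Instead of the mountain-pass class $\Gamma$, I would run the deformation lemma directly on the family of truncated rays. Using (J3) I would show that the deformed family still meets every ray at energy at least $c$, by projecting onto rays via $u\mapsto u/\|u\|$. A Cerami sequence at level $c$ would then follow by contradiction with the deformation.
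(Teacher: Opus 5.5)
The paper does not prove this statement; it quotes it from \cite[Theorem 5.1]{BB}, so your argument has to stand on its own. Your reading of (J2) is correct. The paper's own verification lemma proves $\int_{\R^N}F(x,t_nu_n)/t_n^2\,dx\to\infty$. With the literal normalisation $t_n$ the statement is false: for $\cI(u)=\phi(\|u\|)$ with $\phi(s)=s^3/(3(1+s)^{3/2})$, (J1) and the literal (J2) hold, but $\phi'(s)<s$ for all $s>0$, so $\cN=\emptyset$. With the $t_n^2$ version, your Steps 1 and 2 are correct, and the overall route (Cerami mountain pass, then $c_{mp}=c$) is the standard one.

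\textbf{The gap.} It sits in Step 3, at the point you singled out, and the reasons you give there do not hold as stated.
\begin{itemize}
\item (J1) alone does not make every $\gamma\in\Gamma$ cross the sphere, since it says nothing about the sign of $\cJ$ inside $B_r$.
\item The claim that the maximum of $\varphi_u$ lies beyond the sphere is false. For $\cI(u)=\|u\|^4$ and $r=3/5$ all hypotheses hold, yet $\cN=\{\|u\|=1/2\}$ lies inside the ball.
\item Step 2 does not show that $\varphi_{\gamma(1)}$ decreases after $t_{\gamma(1)}$. It only shows that $t_uu$ is \emph{a} global maximiser, which need not be unique without strict monotonicity.
\end{itemize}
So the sign of $g$ at both ends is not yet proved.

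\textbf{The missing observation.} Combining (J3) with your sphere-crossing remark gives: every critical point $t>0$ of $\varphi_u$ is a global maximum of $\varphi_u$, with value at least $\alpha:=\inf_{\|v\|=r}\cJ>0$. In other words, $\cJ\ge\alpha$ on $\cN$. From this:
\begin{itemize}
\item[(a)] By continuity of $\cJ$ at $0$, there is $\delta\in(0,r)$ with $\cN\cap B_\delta=\emptyset$.
\item[(b)] If $\cJ(v)<0$, then $\|v\|>r$ and $\cJ'(v)(v)<0$. Otherwise $\varphi_v$ would have a critical point $t>0$ with $\varphi_v(t)<0$: either $t=1$, or an interior minimum point of $\varphi_v$ on $[0,1]$ (resp.\ on $[0,r/\|v\|]$). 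That would give $tv\in\cN$ with negative energy.
\item[(c)] If $0<\|v\|<\delta$, then $\varphi_v'$ has no zero on $(0,\delta/\|v\|)$ by (a). It cannot be negative there by (b), so $\cJ'(v)(v)=\varphi_v'(1)>0$.
\end{itemize}

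Now, for $\gamma\in\Gamma$, let $s_0$ be the last time with $\|\gamma(s_0)\|=\delta/2$. Then $g(s_0)>0>g(1)$, and the intermediate value theorem produces some $s\in(s_0,1)$ with $\gamma(s)\ne 0$ and $\gamma(s)\in\cN$. Hence $c_{mp}\ge c$, so $c_{mp}=c$, and the Cerami mountain pass theorem gives the sequence at level $c$. With this lemma your first route is complete, and the fallback via deforming the family of rays is not needed.
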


To use this abstract result in our setting, we choose $E := X^1 (\R^N)$ and $\cI(u) := \int_{\R^N} F(x,u) \, dx$. In the next lemma we verify the validity of assumptions (J1)--(J3) in our case.

\begin{Lem}
There exists $r>0$ such that 
    $$
    \inf_{ u\in X^1(\R^N), \ \|u\|=r}\cJ(u)>0.
    $$

\end{Lem}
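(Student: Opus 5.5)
We follow the standard mountain-pass geometry argument. The only nonstandard ingredient is that we must use the embedding \eqref{eq:embeddings} of $X^1(\R^N)$ into Lebesgue spaces instead of the usual Sobolev embedding of $H^1(\R^N)$. The plan is to bound $\int_{\R^N} F(x,u)\,dx$ from above by $\eps|u|_2^2 + C_\eps |u|_p^p$, then control both Lebesgue norms by $\|u\|$.

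First, we integrate the second inequality of Lemma~\ref{ineq:f} in the variable $u$. For every $\eps>0$ this gives
$$
0\le F(x,u)\le \frac{\eps}{2}|u|^2+\frac{C_\eps}{p}|u|^p \quad\text{for all } x\in\R^N,\ u\in\R .
$$
Hence
$$
\int_{\R^N}F(x,u)\,dx\le \frac{\eps}{2}|u|_2^2+\frac{C_\eps}{p}|u|_p^p .
$$

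Next, we estimate the two Lebesgue norms. By (V) we have $\int_{\R^N} V u^2\,dx\ge V_0|u|_2^2$, and $[u]^2\ge 0$ on $X^1(\R^N)$ (it is the limit of nonnegative quantities on the dense set $H^1(\R^N)$). Therefore $|u|_2^2\le V_0^{-1}\|u\|^2$. Since $p\in(2,2^*)$, the continuous embedding $X^1(\R^N)\hookrightarrow L^p(\R^N)$ from \eqref{eq:embeddings}, together with the equivalence of $\|\cdot\|$ and $\|\cdot\|_{X^1}$, gives a constant $S_p>0$ with $|u|_p\le S_p\|u\|$.

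Combining these bounds, we obtain
$$
\cJ(u)\ge \frac12\Bigl(1-\frac{\eps}{V_0}\Bigr)\|u\|^2-\frac{C_\eps S_p^p}{p}\|u\|^p .
$$
We fix $\eps=V_0/2$. Then for $\|u\|=r$,
$$
\cJ(u)\ge \frac{r^2}{4}-C r^p .
$$
Because $p>2$, the right-hand side is positive for $r>0$ small enough, and this bound is uniform on the whole sphere $\{\|u\|=r\}$. This proves the lemma.

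I do not expect a genuine obstacle here. The one point needing care is that $[u]^2$ is no longer the Dirichlet integral, so the usual Sobolev inequality $|u|_{2^*}\lesssim|\nabla u|_2$ is not available. This is why we work with the subcritical exponent $p<2^*$ and rely on Frank's embedding into $H^s(\R^N)$, quoted in \eqref{eq:embeddings}, rather than on $H^1(\R^N)$.
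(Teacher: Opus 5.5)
Your proposal is correct and follows the same route as the paper: bound $\int_{\R^N} F(x,u)\,dx$ by $\eps|u|_2^2 + C_\eps|u|_p^p$ via Lemma~\ref{ineq:f}, control both Lebesgue norms by $\|u\|$ using (V) and the embedding \eqref{eq:embeddings}, then take $\eps$ small and $r$ small. You also spell out two steps the paper leaves implicit, namely integrating the bound on $f$ to get one on $F$, and deriving $|u|_2^2 \le V_0^{-1}\|u\|^2$ from $[u]^2 \ge 0$; both are correct.
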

\begin{proof}
For each $u\in X^1 (\R^N)$ we use \eqref{ineq:f} and \eqref{eq:embeddings} to compute
\begin{align*}
    \cJ(u)&=\frac12 \|u\|^2 -\int_{\R^N}F(x,u)\,dx\\
    &\geq \frac12 \|u\|^2-\eps |u|_2^2-C_\eps|u|^p_p\gtrsim \frac12 \|u\|^2-\eps \|u\|^2-C_\eps\|u\|^p.
\end{align*}
Hence, for small $\eps>0$ we can choose $r>0$ such that $\cJ(u)>0$ when $\|u\|=r$.
\end{proof}

\begin{Lem}
Suppose that $t_n \to \infty$ and $(u_n)_n \subset X$, $u_n \to u \neq 0$ in $X^1 (\R^N)$. Then
$$
\int_{\R^N} \frac{F(t_n u_n)}{t_n^2} \, dx \to \infty.
$$
\end{Lem}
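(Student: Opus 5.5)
The plan is to apply Fatou's lemma pointwise on the set where the limit $u$ does not vanish. Since $u_n \to u$ in $X^1(\R^N)$, the embedding \eqref{eq:embeddings} into $L^2(\R^N)$ gives $u_n \to u$ in $L^2(\R^N)$. Passing to a subsequence, we get $u_n(x) \to u(x)$ for a.e. $x \in \R^N$. It suffices to prove the divergence along every subsequence, so this reduction costs nothing.

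Because $u \neq 0$ in $X^1(\R^N)$, and $X^1 \hookrightarrow L^2$ is an injective embedding (it is a continuous inclusion of function spaces), $u$ is nonzero as an $L^2$ function. Hence the set $\Omega := \{x : u(x) \neq 0\}$ has positive Lebesgue measure. For a.e. $x \in \Omega$ we have $u_n(x) \to u(x) \neq 0$, so $|t_n u_n(x)| \to \infty$. Assumption (F3), uniform in $x$, then gives
\[
\frac{F(x,t_n u_n(x))}{t_n^2} = \frac{F(x,t_n u_n(x))}{(t_n u_n(x))^2}\, u_n(x)^2 \to \infty .
\]

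Next, by Lemma \ref{ineq:f} we have $F \geq 0$ everywhere. This lets us drop the integral over $\R^N \setminus \Omega$ and apply Fatou's lemma on $\Omega$:
\[
\liminf_{n\to\infty} \int_{\R^N} \frac{F(x,t_n u_n)}{t_n^2}\,dx \;\geq\; \int_{\Omega} \liminf_{n\to\infty} \frac{F(x,t_n u_n)}{t_n^2}\,dx \;=\; \infty,
\]
since the integrand tends to $+\infty$ on a set of positive measure.

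The only mildly delicate point is justifying that $u\neq 0$ in $X^1$ forces $|\Omega|>0$, i.e.\ that elements of $X^1(\R^N)$ are genuine functions. This follows from the embedding \eqref{eq:embeddings}, in particular from the fact that the $L^2$ part $|u|_2$ is part of the norm and $\|u\|_{X^1}=0$ iff $u=0$. To be safe, one can argue directly: if $u=0$ a.e., then $u=0$ in $L^2$, and $X^1 \hookrightarrow H^s$ is an injective embedding (as used in \cite{Frank}). Everything else is routine.
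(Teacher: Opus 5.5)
Your proof is correct and is essentially the paper's argument: restrict to the set where $u\neq 0$ (the paper writes $\supp u$), use (F3) pointwise there, discard the rest using $F\geq 0$, and conclude with Fatou's lemma. What you add is what the paper leaves implicit, namely the passage to an a.e.-convergent subsequence and the fact that $\{u\neq 0\}$ has positive measure. For the latter, the fact really used is that $X^1(\R^N)\hookrightarrow L^2(\R^N)$ is injective, i.e.\ that $X^1$ is realized as a space of functions, and not merely that $\|\cdot\|_{X^1}$ is a norm.
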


\begin{proof}
From (F3) and Fatou's lemma
$$
\int_{\R^N} \frac{F(t_n u_n)}{t_n^2} \, dx = \int_{\R^N} \frac{F(t_n u_n)}{t_n^2 u_n^2} u_n^2 \, dx \geq \int_{\supp u} \frac{F(t_n u_n)}{t_n^2 u_n^2} u_n^2 \, dx \to \infty
$$
as $n \to \infty$.
\end{proof}

\begin{Lem}\label{ineq:J(tu)}
For every $u \in X^1 (\R^N)$,
$$
\cJ(u) \geq \cJ(tu) - \frac{t^2-1}{2} \cJ'(u)(u) \hbox{ for } t \in (0,\infty).
$$
In particular, $\cJ(u) \geq \cJ(tu)$ for $t \in (0,\infty)$ and $u \in \cN$, and (J3) holds.
\end{Lem}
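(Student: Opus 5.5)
Fix $u \in X^1(\R^N)$ and $t>0$. The quadratic parts cancel: since $\cJ'(u)(u) = \|u\|^2 - \int_{\R^N} f(x,u)u\,dx$, we get
$$
\cJ(u) - \cJ(tu) + \frac{t^2-1}{2}\cJ'(u)(u) = \int_{\R^N} \varphi_x\bigl(t, u(x)\bigr)\,dx,
$$
where
$$
\varphi_x(t,s) := \frac{t^2-1}{2} f(x,s)s + F(x,s) - F(x,ts).
$$
It therefore suffices to prove the pointwise inequality $\varphi_x(t,s) \geq 0$ for all $t>0$, $s \in \R$ and a.e. $x$, and then integrate. Every term is integrable, since $u \in L^2\cap L^p$ and (F1) together with Lemma \ref{ineq:f} control $F$ and $fs$.

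For the pointwise inequality, fix $x$ and $s \neq 0$; the case $s=0$ is trivial. Write $F(x,ts) - F(x,s) = \int_1^t f(x,\tau s)s\,d\tau$ and $\frac{t^2-1}{2} f(x,s)s = \int_1^t \tau f(x,s)s\,d\tau$. This gives
$$
\varphi_x(t,s) = \int_1^t \bigl(\tau f(x,s)s - f(x,\tau s)s\bigr)\,d\tau = \int_1^t \tau s^2\left(\frac{f(x,s)}{s} - \frac{f(x,\tau s)}{\tau s}\right)d\tau .
$$
By (F4), $r\mapsto f(x,r)/|r|$ is nondecreasing on each half-line. Consequently $r \mapsto f(x,r)/r$ is nondecreasing on $(0,\infty)$. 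On $(-\infty,0)$ we have $f/r = -f/|r|$, and for $r<0$ with $\tau>1$ the point $\tau r$ lies further from zero; I need to check that the sign works out there. I expect this sign bookkeeping for negative $s$ to be the only delicate point.

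For $s>0$: if $\tau>1$, then $f(x,\tau s)/(\tau s) \ge f(x,s)/s$, so the integrand is $\le 0$ over $[1,t]$, and the integral is $\ge 0$. If $t<1$, the integrand is $\ge 0$ but the orientation is reversed, so again $\varphi_x \ge 0$.

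For $s<0$: write $s=-\sigma$ with $\sigma>0$. Then $f(x,\tau s)/(\tau s) = f(x,-\tau\sigma)/(-\tau\sigma) = f(x,-\tau\sigma)/|{-\tau\sigma}|\cdot(-1)$. To make this rigorous I would simply apply the substitution $g(r) := -f(x,-r)$ and use the standard reading of (F4) as in \cite{SzulkinWeth}, namely that $f(x,r)/r$ is nondecreasing in $|r|$ on each half-line, under which the $s<0$ case reduces to the $s>0$ case applied to $g$. If the literal $|u|$ in (F4) caused trouble, I would note that Lemma \ref{ineq:f} already uses the same convention.

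Finally, the two consequences follow directly. For $u\in\cN$, $\cJ'(u)(u)=0$ gives $\cJ(u)\ge\cJ(tu)$. Since $\cJ(tu)=\frac{t^2}{2}\|u\|^2-\cI(tu)$, $\cJ(u)=\frac12\|u\|^2-\cI(u)$ and $\cJ'(u)(u)=\|u\|^2-\cI'(u)(u)$, the norm terms cancel and the main inequality becomes exactly
$$
\frac{t^2-1}{2}\cI'(u)(u)-\cI(tu)+\cI(u)\le 0,
$$
which is (J3).
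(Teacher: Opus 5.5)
Your argument has two sign errors that cancel, so as written it rests on a false intermediate claim.

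First, the identity has the wrong sign. After the quadratic terms cancel, one gets
\[
\cJ(u)-\cJ(tu)+\frac{t^2-1}{2}\cJ'(u)(u)=\int_{\R^N}\Bigl(F(x,tu)-F(x,u)-\frac{t^2-1}{2}f(x,u)u\Bigr)\,dx=-\int_{\R^N}\varphi_x\bigl(t,u(x)\bigr)\,dx .
\]

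Second, the pointwise inequality $\varphi_x(t,s)\ge 0$ you set out to prove is false. Your own representation is $\varphi_x(t,s)=\int_1^t \tau s^2\bigl(\frac{f(x,s)}{s}-\frac{f(x,\tau s)}{\tau s}\bigr)\,d\tau$. Its integrand is $\le 0$ for $\tau>1$, so integrating over $[1,t]$ with $t>1$ gives $\varphi_x\le 0$, not $\ge 0$. For $t<1$ the integrand is $\ge 0$ on $[t,1]$, and the reversed orientation again gives $\varphi_x\le 0$. Concretely, for $f(x,s)=|s|^{p-2}s$ one finds $\varphi_x(t,s)=|s|^p\bigl(\frac{t^2-1}{2}+\frac{1-t^p}{p}\bigr)<0$ whenever $s\neq 0$ and $t\neq 1$.

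Your final paragraph exposes the inconsistency. There you correctly reduce the claim to $\frac{t^2-1}{2}\cI'(u)(u)-\cI(tu)+\cI(u)\le 0$. That left-hand side is exactly $\int_{\R^N}\varphi_x(t,u)\,dx$, so what is needed is $\int\varphi_x\le 0$, the opposite of what your first paragraph says suffices.

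The repair is to correct both signs. With the identity above, it suffices to show $\varphi_x(t,s)\le 0$, which is precisely what your $\tau$-integral gives. The argument then becomes the pointwise form of the paper's proof. The paper sets $\psi(t)=\cJ(u)-\cJ(tu)+\frac{t^2-1}{2}\cJ'(u)(u)$ and computes $\psi'(t)=\int_{\R^N}\bigl(f(x,tu)u-tf(x,u)u\bigr)\,dx$. It then uses (F4) to get $\psi'\le 0$ on $(0,1)$, $\psi'\ge 0$ on $(1,\infty)$, and $\psi(1)=0$. Your $\tau$-integrand is minus the density of $\psi'(\tau)$.

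For $s<0$, no alternative reading of (F4) is needed, and the check should be written out rather than deferred. Since $f(x,r)/r=-f(x,r)/|r|$ for $r<0$, (F4) as stated says that $f(x,r)/r$ is nonincreasing on $(-\infty,0)$, i.e.\ nondecreasing in $|r|$. Hence for $s<0$ and $\tau>1$ (so $\tau s<s$) one has $\frac{f(x,\tau s)}{\tau s}\ge\frac{f(x,s)}{s}$, with the reverse inequality for $\tau<1$, exactly as for $s>0$. Equivalently, $g(r)=-f(x,-r)$ has $g(r)/r=f(x,-r)/(-r)$ nondecreasing on $(0,\infty)$. The ``standard reading'' you invoke is literally equivalent to (F4).
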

\begin{proof}
    The proof follows from calculations in \cite{dePaiva, SzulkinWeth, BieganowskiMederski} with minor modifications. To sketch the proof we introduce 
    $$
    \psi(t):= \cJ(u) - \cJ(tu) + \frac{t^2-1}{2} \cJ'(u)(u) \hbox{ for } t \in (0,\infty)
    $$
    and note that 
    $$
    \psi'(t)=-\cJ'(tu)(u)+t\cJ'(u)(u)=\int_{\R^N} f(x,tu)u-f(x,u)tu\,dx.
    $$
    From (F4) $\psi'(t)\geq0$ for $t>1$ and $\psi'(t)\leq0$ for $t<1.$ Hence, $\psi$ has a global minimum at $t=1$ and 
    $$
    \psi(t)\geq \psi(1)=0\quad \hbox{for } t>0.
    $$
\end{proof}

\section{Analysis of Cerami sequences}
We recall the following variant of the Lions' concentration-compactness principle.
\begin{Lem}[{\cite[Lemma 4.4]{BBDS}}]\label{lions}
    Let $R>0$ and $(u_n)_n \subset X^1(\R^N)$ be a bounded sequence. If
    $$
    \lim_{n\to\infty}\sup_{y\in\R^N} \int_{B(y,R)} u_n^2\,dx=0
    $$
    then $u_n\to 0$ in $L^p(\R^N)$ for $p\in (2, 2^*)$
\end{Lem}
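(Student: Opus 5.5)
The plan is to reduce the claim to the classical Lions lemma in fractional Sobolev spaces, using the embedding \eqref{eq:embeddings} $X^1(\R^N)\hookrightarrow H^s(\R^N)$ for $s\in(0,1)$. Since $(u_n)_n$ is bounded in $X^1(\R^N)$, it is bounded in $H^s(\R^N)$ for every fixed $s\in(0,1)$, and hence in $L^t(\R^N)$ for every $t\in[2,2_s^*]$, where $2_s^*=\frac{2N}{N-2s}$. Given $p\in(2,2^*)$, I first pick $s\in(0,1)$ close enough to $1$ that $p<2_s^*$; this is possible because $2_s^*\to 2^*$ as $s\to1^-$.

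Next, cover $\R^N$ by cubes $Q_k=k+[0,1)^N$, $k\in\Z^N$, or by cubes of side comparable to $R$, each contained in a ball $B(y_k,R)$ after rescaling. On each cube I use the local fractional Sobolev inequality $|u|_{L^{2_s^*}(Q)}\lesssim \|u\|_{H^s(Q)}$. Here $\|u\|_{H^s(Q)}^2$ is the Gagliardo norm restricted to $Q\times Q$, and the constant is uniform in $k$ by translation invariance of Lebesgue measure. Interpolating with $\theta\in(0,1)$ given by $\frac1p=\frac{1-\theta}{2}+\frac{\theta}{2_s^*}$ yields
$$
\int_{Q_k}|u_n|^p\,dx\lesssim |u_n|_{L^2(Q_k)}^{(1-\theta)p}\,\|u_n\|_{H^s(Q_k)}^{\theta p}.
$$
In the standard way I then choose an auxiliary exponent $q\in(2,2_s^*)$ with $\theta q=2$, so that the $H^s$ factor appears squared. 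Summing over $k$ and using
$$
\sum_k\|u_n\|^2_{H^s(Q_k)}\le \|u_n\|^2_{H^s(\R^N)}\lesssim\|u_n\|^2_{X^1},
$$
which holds because the sets $Q_k\times Q_k$ are disjoint in $\R^N\times\R^N$, I get
$$
|u_n|_q^q\lesssim \Big(\sup_{k}\int_{Q_k}u_n^2\Big)^{(1-\theta)q/2}\,\|u_n\|_{X^1}^2\to 0.
$$
Since each $Q_k$ sits inside finitely many balls $B(y,R)$, the hypothesis gives $\sup_k\int_{Q_k}u_n^2\to0$.

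Finally, convergence in $L^q$ for one $q\in(2,2^*)$ upgrades to every $p\in(2,2^*)$. I interpolate between $L^2$ and $L^q$, or between $L^q$ and $L^{t}$ for some $t<2^*$, using the uniform bounds in $L^2$ and $L^t$ that come from \eqref{eq:embeddings}.

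The main obstacle is the first step, which requires working in $H^s$ instead of $H^1$. The $X^1$ norm is not local in a translation-invariant way: the Hardy term and the surface correction at the origin prevent splitting $[u]^2$ into cube pieces. I will therefore avoid $[\,\cdot\,]$ entirely and work only with the Gagliardo seminorm, whose splitting over $Q_k\times Q_k$ is elementary. I still need to check that the local fractional Sobolev inequality on a unit cube holds with a constant independent of $k$. This follows from an extension operator for $W^{s,2}$ on a Lipschitz cube together with translation invariance.
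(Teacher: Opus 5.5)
Your argument is correct. The three steps all check out: the $H^s$ bound via \eqref{eq:embeddings}, the cube decomposition with the uniform local fractional Sobolev inequality, disjointness of the sets $Q_k\times Q_k$, and the choice $q=2+\frac{4s}{N}$, which indeed gives $\theta q=2$. The final interpolation to all $p\in(2,2^*)$ is also fine. The paper gives no proof of its own and simply quotes \cite[Lemma 4.4]{BBDS}; that lemma is the reduction of the $X^1$ case, through $X^1(\R^N)\hookrightarrow H^s(\R^N)$, to the Lions lemma in $H^s$, which you have written out by hand, so your route is essentially the intended one.
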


\begin{Lem}\label{cerami-bounded}
    Any Cerami sequence $(u_n)_n \subset H^1 (\R^N)$ is bounded in $X^1 (\R^N)$.
\end{Lem}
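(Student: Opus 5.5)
We argue by contradiction, along the lines of the Jeanjean / Szulkin--Weth argument for Cerami sequences under the monotonicity condition (F4). Let $(u_n)$ be a Cerami sequence at level $c$ with $\|u_n\|\to\infty$, and set $v_n := u_n/\|u_n\|$. Then $\|v_n\|=1$. The plan is to show that $(v_n)$ can neither vanish nor fail to vanish, in the sense of Lemma \ref{lions}.

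\textbf{Step 1: non-vanishing.} Suppose $(v_n)$ vanishes. By Lemma \ref{lions}, $v_n\to 0$ in $L^p(\R^N)$. Fix $s>0$. Lemma \ref{ineq:f} bounds $F(x,u)$ by $\eps u^2 + C_\eps|u|^p$, and $|v_n|_2$ is bounded. Hence
$$
\int_{\R^N} F(x,s v_n)\,dx \to 0 .
$$
By Lemma \ref{ineq:J(tu)} with $u=u_n$ and $t = s/\|u_n\|$ (which lies in $(0,1)$ for large $n$),
$$
\cJ(u_n) \geq \cJ(s v_n) - \frac{t^2-1}{2}\cJ'(u_n)(u_n) = \frac{s^2}{2} - \int F(x,sv_n)\,dx + o(1).
$$
Here we used that $\cJ'(u_n)(u_n) \le (1+\|u_n\|)\|\cJ'(u_n)\| \to 0$ by the Cerami property, and that $|t^2-1|\le 1$. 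Letting $n\to\infty$ gives $c \ge s^2/2$ for every $s>0$, which is absurd.

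\textbf{Step 2: non-vanishing case.} So there exist $R>0$, $\delta>0$ and points $y_n$, which may be taken in $\Z^N$ after enlarging $R$, with
$$
\int_{B(y_n,R)} v_n^2\,dx \ge \delta .
$$
Translations do not act on $X^1(\R^N)$, so we cannot just translate $v_n$ and take a weak limit in the space. We therefore split into two cases.

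If $(y_n)$ is bounded, we pass to a subsequence with $v_n\rightharpoonup v$ in $X^1(\R^N)$. By the compact local embedding into $L^2_{\loc}$, $v\neq 0$. On the set $\{v\neq0\}$ we have $|u_n| = \|u_n\||v_n| \to \infty$, so (F3) and Fatou give
$$
\int_{\R^N} \frac{F(x,u_n)}{\|u_n\|^2}\,dx \ge \int_{\{v\ne 0\}} \frac{F(x,u_n)}{u_n^2}\,v_n^2\,dx \to \infty ,
$$
using $F\ge 0$ off $\{v\neq0\}$. On the other hand,
$$
\frac{\cJ(u_n)}{\|u_n\|^2} = \frac12 - \int_{\R^N} \frac{F(x,u_n)}{\|u_n\|^2}\,dx ,
$$
and the left side tends to $0$. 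This is a contradiction.

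If $|y_n|\to\infty$, we apply the same Fatou argument directly to the translated functions $w_n := v_n(\cdot+y_n)$. These are not in $X^1(\R^N)$, but they are bounded in $H^s(\R^N)$ for some $s\in(0,1)$ by \eqref{eq:embeddings}, since the $H^s$ norm is translation invariant. They are therefore bounded in $H^s_{\loc}$, hence precompact in $L^2_{\loc}$. Up to a subsequence, $w_n\to w$ in $L^2_{\loc}$ and a.e., with $\int_{B(0,R)} w^2 \ge\delta$, so $w \neq 0$. Periodicity of $F$ in $x$ gives
$$
\int F(x,u_n)\,dx = \int F\big(x,\|u_n\| w_n\big)\,dx ,
$$
and the same Fatou estimate on $\{w\neq 0\}$ yields the same contradiction.

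The only delicate point is the use of Lemma \ref{ineq:J(tu)} for arbitrary $u$ (not only on $\cN$), which that lemma already provides. The other point to check is avoiding any weak limit in $X^1(\R^N)$ in the translated case, which is handled by working with a.e. limits obtained from the translation-invariant $H^s$ bound.

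Actually the case split is unnecessary. In both cases one only needs a.e. convergence of suitably translated $v_n$ to a nonzero function, so the $H^s$ argument covers the bounded case as well.
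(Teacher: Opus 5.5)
Your proof is correct and follows the same route as the paper. You rule out vanishing of $v_n=u_n/\|u_n\|$ via Lemma~\ref{lions} and Lemma~\ref{ineq:J(tu)} with $t=s/\|u_n\|$, where the Cerami condition kills the correction term; you rule out non-vanishing by translating along $y_n\in\Z^N$ and combining periodicity, (F3) and Fatou's lemma. Your justification of the $L^2_{\loc}$ convergence of $v_n(\cdot+y_n)$ through the translation-invariant $H^s$ bound is a welcome addition, since the paper simply asserts this convergence even though $X^1$-bounds are not preserved under translations.
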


\begin{proof}
    Up to subsequences, we suppose on the contrary that $\|u_n\|\to\infty$, and we define 
    $$v_n:=\frac{u_n}{\|u_n\|}.$$
    Without loss of generality we may assume that
    $$v_n\weakto v \hbox{ in } X^1(\R^N), \ v_n\to v\hbox{ a.e. in }\R^N.$$ 
    We claim that 
    \begin{equation}\label{lionsCondition}
    \lim_{n\to \infty} \sup_{y\in\R^N}\int_{B(y,r)} |v_n|^2\,dx>0.
    \end{equation}
    Indeed, if the left-hand side is zero, then $v_n\to 0$ in $L^p(\R^N)$ from Lions' lemma (Lemma \ref{lions}). In view of Lemma \ref{ineq:f}, we get that $\int_{\R^N}F(x,sv_n)\,dx\to 0$ for any $s>0$. Let us fix $s>0$ and observe that Lemma \ref{ineq:J(tu)} implies
$$
\cJ(u_n)\geq \cJ(sv_n)+o(1).
$$
Hence
$$
1 \gtrsim \limsup_{n\to\infty} \cJ(u_n)\geq \limsup_{n\to\infty}\cJ(sv_n)\geq \limsup_{n\to\infty}\frac{s^2}{2}\|v_n\|^2 = \frac{s^2}{2}.
$$
Since $s>0$ is arbitrary we get contradiction. Hence \eqref{lionsCondition} is proved.
As a consequence, we can find a sequence $(y_n)_n\subset \Z^N$ and radius $r>0$ such that 
$$
\liminf_{n\to \infty} \int_{B(y_n,r)} |v_n|^2\,dx = \liminf_{n\to \infty} \int_{B(0,r)} |v_n(\cdot + y_n)|^2\,dx >0.
$$
Therefore, up to a subsequence, $v_n(\cdot+y_n)\to v$ in $L^2_{\loc}(\R^N)$ and $v\neq 0$. If $v(x)\neq 0,$ then $|u_n(x+y_n)|=|v_n(x+y_n)|\|u_n\|\to\infty$. Using (F3)
$$
\frac{F(x,u_n(x+y_n))}{\|u_n\|^2}=\frac{F(x, u_n(x+y_n))}{|u_n(x+y_n)|^2}|v_n(x+y_n)|^2\to \infty.
$$
Then by the Fatou's lemma we get
\begin{align*}
    0=\limsup_{n\to\infty} \frac{\cJ(u_n)}{\|u_n\|^2}=\limsup_{n\to\infty} \left(\frac12-\int_{\R^N}\frac{F(x,u_n(x+y_n)}{\|u_n\|^2}\right)=-\infty,
\end{align*}
which is again a contradiction.
\end{proof}

Let us define the functional $\cJ_0\colon H^1(\R^N)\to \R$ with the formula
$$
\cJ_0(u)=\frac12\int_{\R^N}|\nabla u|^2+V(x)u^2-F(x,u)\,dx.
$$
Now we are able to prove the profile decomposition of Cerami sequences that are in $H^1 (\R^N)$. Since $H^1(\R^N)$ is a dense subspace, we can replace any Cerami sequence in $X^1(\R^N)$ by a Cerami sequence in $H^1(\R^N)$. Therefore, this additional assumption is not restrictive, but it simplifies a bit the proof, since $H^1 (\R^N)$ is the maximal, translation-invariant subspace of $X^1(\R^N)$, see \cite{BBDS}.

The following regularity result will be used in the proof of our decomposition theorem for Cerami sequences. We recall its proof for the reader's sake.
\begin{Lem} \label{lem:4.3}
    Let $\Omega$ be an open subset of $\R^N$. If $u$ is a distribution on $\Omega$ such that $\Delta u \in H^{-1}(\Omega)$, then $u \in H_{\mathrm{loc}}^{1}(\Omega)$.
\end{Lem}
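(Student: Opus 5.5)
The claim is local, so I would first reduce to a ball and then invert the Laplacian using a Newtonian potential. Fix $x_0\in\Omega$ and $r>0$ with $\overline{B(x_0,2r)}\subset\Omega$, and choose a cut-off $\varphi\in\cC_c^\infty(B(x_0,2r))$ with $\varphi\equiv 1$ on $B(x_0,r)$. It suffices to show $u\in H^1(B(x_0,r))$.

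The first difficulty is that $u$ is only a distribution, so $\Delta(\varphi u)=\varphi\Delta u+2\nabla\varphi\cdot\nabla u+u\Delta\varphi$ contains lower-order terms we do not yet control. To handle this I would use the structure theorem for distributions. On the relatively compact set $B(x_0,2r)$, $u$ has finite order, so $\varphi u\in H^{-m}(\R^N)$ for some $m\in\mathbb{N}$.

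The core step is an iterative bootstrap gaining one derivative at a time. Suppose that for every cut-off $\psi$ supported in a given ball we know $\psi u\in H^{-k}(\R^N)$ with $k\geq 0$. Then
$$
\Delta(\varphi u)=\varphi\Delta u+2\,\mathrm{div}(u\nabla\varphi)-u\Delta\varphi .
$$
The term $\varphi\Delta u$ lies in $H^{-1}(\R^N)$, because multiplication by a smooth compactly supported function maps $H^{-1}(\Omega)$ into $H^{-1}(\R^N)$. The term $u\nabla\varphi$ lies in $H^{-k}$ by hypothesis, applied with $\psi=\partial_j\varphi$, so its divergence lies in $H^{-k-1}$. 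Likewise $u\Delta\varphi\in H^{-k}$. Hence $\Delta(\varphi u)\in H^{-\min(1,k+1)}$.

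Now $w:=\varphi u$ is compactly supported, and $(1-\Delta)w=w-\Delta w\in H^{-\min(1,k+1)}$ when $k\ge0$. Inverting $1-\Delta$ via the Fourier transform gives $w\in H^{\min(1,k+1)}$. When $k\geq 1$ we get $H^1$ and stop; when $k=0$ we obtain $\varphi u\in H^1$ directly.

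If the starting order is negative, say $w\in H^{-k}$ with $k\geq1$, the same computation gives $(1-\Delta)w\in H^{-k-1}$, hence $w\in H^{-k+1}$. That is a gain of one derivative on a slightly smaller ball. After $m+1$ iterations, with nested balls of radii decreasing from $2r$ to $r$ and cut-offs adapted to each, we reach $\varphi u\in H^1(\R^N)$, and therefore $u\in H^1(B(x_0,r))$. Covering compact subsets of $\Omega$ by finitely many such balls gives $u\in H^1_{\loc}(\Omega)$.

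The main obstacle is the bookkeeping in the bootstrap. At each stage the cut-offs $\nabla\varphi$ and $\Delta\varphi$ must be supported where the previous regularity is already known, which is why I would fix the nested family of balls at the start. The remaining steps are routine Fourier multiplier estimates.
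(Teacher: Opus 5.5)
Your argument is correct in substance, but it takes a different route from the paper. The paper argues globally. It takes $v\in H^1_0(\Omega)$ with $\Delta v=\Delta u$, observes that $u-v$ is a harmonic distribution and hence smooth by Weyl's lemma, and concludes $u=(u-v)+v\in H^1_{\mathrm{loc}}(\Omega)$. This is very short, but it delegates all the regularity to Weyl's lemma. It also needs the Dirichlet problem to be solvable in $H^1_0(\Omega)$. Lax--Milgram guarantees this when $\Omega$ supports a Poincar\'e inequality (e.g.\ $\Omega$ bounded), but solvability can fail for $\Omega=\R^N$, which is the case used in Step 3. So strictly one should first restrict $\Delta u$ to balls $B\Subset\Omega$, which is harmless since the conclusion is local.

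Your proof is local from the outset. It replaces Weyl's lemma by the finite order of $u$ on relatively compact sets together with the Fourier isomorphism $(1-\Delta)\colon H^{\sigma+2}(\R^N)\to H^{\sigma}(\R^N)$. It is longer, but elementary and valid verbatim for every open $\Omega$. The same bootstrap also gives higher interior regularity when $\Delta u$ is more regular.

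There is one slip in your core step that you should fix. If $\psi u\in H^{-k}$ with $k\ge 0$, then $\varphi\Delta u+2\,\mathrm{div}(u\nabla\varphi)-u\Delta\varphi\in H^{-1}+H^{-k-1}+H^{-k}=H^{-(k+1)}$. That is $H^{-\max(1,k+1)}$, not $H^{-\min(1,k+1)}$, so what you actually get is $\varphi u\in H^{1-k}$. In particular, the sentence ``when $k\geq 1$ we get $H^1$ and stop'' is false as written. Your following paragraph does the computation correctly and your count of $m+1$ iterations is right, so nothing breaks. The cleanest bookkeeping is to assume $\psi u\in H^{\sigma}$ for all $\psi\in\cC_c^\infty(B)$ and derive $\varphi u\in H^{\min(1,\sigma+1)}$ for all $\varphi\in\cC_c^\infty(B)$.
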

\begin{proof}
    We follow \cite{DiFrattaFiorenza}. Let $v \in H_0^{1}(\Omega)$ be the unique solution to the equation $\Delta v = \Delta u$ in the sense of distributions. Hence $\Delta (u-v)=0$ in the sense of distributions, and Weyl's Lemma (see \cite[Theorem 8.12]{Rudin}) implies that $u-v \in \cC^\infty(\Omega)$. It follows that $u=(u-v)+v$ is the sum of a smooth function and of a function from $H_0^1(\Omega)$. In particular, $u \in H_{\mathrm{loc}}^{1}(\Omega)$. 
\end{proof}

\begin{Th}
    Let $(u_n)_n\subset H^1(\R^N)$ be a bounded in $X^1(\R^N)$ Cerami sequence of $\cJ$ at level $c\geq 0$. Then there are $m\geq 0$, $\overline{u}_0 \in X^1 (\R^N)$, and there are sequences $(\overline{u}_i)_{i=1}^m\subset H^1 (\R^N)$ and $(x^i_n)_{0\leq i\leq m}\subset\Z^N$ such that $x^0_n=0, |x^i_n|\to\infty,|x^i_n-x^j_n|\to\infty, i\neq j, i,j=1,\ldots,m$ and passing to a subsequence the following hold:
\begin{align}
   \label{a} &\cJ'(\overline{u}_0)=0;\\
    \label{c}&\overline{u}_i \neq 0 \hbox{ and } \cJ_0'(\overline{u}_i) = 0 \hbox{ for each } 1\leq i\leq m;\\
   \label{d} & \left\|u_n-\overline{u}_0-\sum_{i=0}^m\overline{u}_i\right\| \underset{n\to\infty}\longrightarrow 0\\
   \label{e} &\cJ(u_n)\to \cJ(\overline{u}_0)+\sum_{i=1}^m\cJ_0(\overline{u}_i)
\end{align}
    
\end{Th}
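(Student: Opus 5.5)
The plan is the standard iterative extraction of profiles, adapted to the fact that $X^1(\R^N)$ is not translation invariant. Since $(u_n)_n$ is bounded in $X^1(\R^N)$, up to a subsequence $u_n\weakto \overline{u}_0$ in $X^1(\R^N)$ and a.e. First I will show that $\cJ'(\overline{u}_0)=0$. For $\varphi\in\cC_0^\infty(\R^N)$, weak convergence handles the quadratic part. The compact embedding $X^1\hookrightarrow\hookrightarrow L^t_{\loc}$ together with (F1) and Lemma \ref{ineq:f} gives $\int f(x,u_n)\varphi\to\int f(x,\overline{u}_0)\varphi$. Hence $\cJ'(\overline{u}_0)(\varphi)=\lim \cJ'(u_n)(\varphi)=0$, and density extends this to all test functions. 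Next I set $v_n^1:=u_n-\overline{u}_0$. The key splitting facts are two. The first is $\|u_n\|^2=\|\overline{u}_0\|^2+\|v_n^1\|^2+o(1)$, which holds because the space is Hilbert. The second is the Brezis--Lieb type splitting $\int F(x,u_n)=\int F(x,\overline{u}_0)+\int F(x,v_n^1)+o(1)$, together with its analogue for $f(x,u_n)\varphi$ uniformly in bounded $\varphi$. Both follow from a.e. convergence and the growth bound in Lemma \ref{ineq:f}, by the usual argument. These give $\cJ(u_n)=\cJ(\overline{u}_0)+\cJ(v_n^1)+o(1)$ and $\cJ'(v_n^1)\to0$ in the dual.

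If $\sup_y\int_{B(y,1)}|v_n^1|^2\to0$, then Lemma \ref{lions} gives $v_n^1\to0$ in $L^p$. Then $\|v^1_n\|^2=\cJ'(v_n^1)(v_n^1)+\int f(x,v_n^1)v_n^1\to0$, and we stop with $m=0$. Otherwise there are $x_n^1\in\Z^N$ with $\liminf\int_{B(x_n^1,r)}|v_n^1|^2>0$. Since $v_n^1\weakto0$, local compactness forces $|x_n^1|\to\infty$. Note that $v_n^1\in X^1$ but not necessarily in $H^1$, because $\overline{u}_0$ may fail to be in $H^1$. Still, the translates $v_n^1(\cdot+x_n^1)$ are bounded in $L^2$ and, by \eqref{eq:embeddings}, in $H^s$ for every $s<1$. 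So they converge weakly and locally in $L^t$ to some $\overline{u}_1\neq0$.

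The main obstacle is showing that $\overline{u}_1\in H^1(\R^N)$ and $\cJ_0'(\overline{u}_1)=0$. The singular term, tested against $\varphi(\cdot-x^1_n)$ with $\varphi\in\cC_0^\infty$, carries the weight $\frac{(N-2)^2}{4|x+x_n^1|^2}$, which tends to $0$ uniformly on $\supp\varphi$. Combined with the $L^2$ bound, this should show that in the limit the singular part of the quadratic form vanishes. Hence for all $\varphi\in\cC_0^\infty$
$$\int_{\R^N}\overline{u}_1(-\Delta\varphi)+V\overline{u}_1\varphi\,dx=\int_{\R^N}f(x,\overline{u}_1)\varphi\,dx,$$
using periodicity of $V,f$ and $x_n^1\in\Z^N$. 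To make the passage of $\langle v_n^1,\varphi(\cdot-x_n^1)\rangle$ to $\int\overline{u}_1(-\Delta\varphi)$ rigorous, I would use the fact that for compactly supported smooth test functions away from the origin the scalar product is given by the distributional formula. This yields $-\Delta\overline{u}_1=f(x,\overline{u}_1)-V\overline{u}_1\in H^{-1}$, since $\overline{u}_1\in L^2\cap L^p$. Lemma \ref{lem:4.3} then gives $\overline{u}_1\in H^1_{\loc}$. To upgrade this to global $H^1$, I would test the equation with $\overline{u}_1\chi_R^2$ for cutoffs $\chi_R$ and obtain a uniform bound on $\int|\nabla(\chi_R\overline{u}_1)|^2$. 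Thus $\overline{u}_1\in H^1$ and $\cJ_0'(\overline{u}_1)=0$.

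Having $\overline{u}_1\in H^1$, I set $v_n^2:=v_n^1-\overline{u}_1(\cdot-x_n^1)$, which uses the translation invariance of $H^1\subset X^1$. The norm splitting requires $\|\overline{u}_1(\cdot-x_n^1)\|^2\to\|\overline{u}_1\|^2_{H^1_V}$, since the Hardy term of a fixed $H^1$ function translated to infinity vanishes, together with a cross-term computation of the same type. This yields $\cJ(v_n^1)=\cJ_0(\overline{u}_1)+\cJ(v_n^2)+o(1)$ and $\cJ'(v_n^2)\to0$. I then iterate, obtaining $|x_n^i-x_n^j|\to\infty$ in the standard way from the weak convergence to $0$ of the successive remainders. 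The iteration terminates because each nontrivial critical point of $\cJ_0$ satisfies $\|\overline{u}_i\|^2_{H^1_V}=\int f\overline{u}_i\overline{u}_i$, which gives a uniform lower bound $\|\overline{u}_i\|\ge\rho>0$ as in Lemma (J1). Since $\sum_i\|\overline{u}_i\|^2\le\limsup\|u_n\|^2<\infty$, only finitely many profiles occur. The final remainder vanishes in $L^p$ and hence in norm, which gives \eqref{d} and \eqref{e}.
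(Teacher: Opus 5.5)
\paragraph{Verdict.} Your overall structure matches the paper's: the weak limit $\overline{u}_0$, the Lions alternative, extraction along $\Z^N$-translations, $H^1$-regularity of profiles via Lemma~\ref{lem:4.3} plus a Caccioppoli bound, and termination via $\rho$. However, the step you dismiss as ``a cross-term computation of the same type'' is exactly where the lack of translation invariance bites, and as written it fails.

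\paragraph{The gap: the cross term.} For $\|v_n^1\|^2=\int_{\R^N}(|\nabla\overline{u}_1|^2+V\overline{u}_1^2)\,dx+\|v_n^2\|^2+o(1)$ you need $\langle v_n^1,\overline{u}_1(\cdot-x_n^1)\rangle\to\int_{\R^N}(|\nabla\overline{u}_1|^2+V\overline{u}_1^2)\,dx$. Unlike $\int_{\R^N}\overline{u}_1(\cdot-x_n^1)^2|x|^{-2}\,dx\to0$, the singular part of this cross term cannot be handled by Cauchy--Schwarz. With the critical constant, $\|\cdot\|$ gives no bound on $\int_{\R^N} u^2|x|^{-2}\,dx$; this is precisely why $X^1(\R^N)\neq H^1(\R^N)$. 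For $v_n^1=u_n-\overline{u}_0$ this integral may even be $+\infty$, since elements of $X^1(\R^N)$ such as $|x|^{-(N-2)/2}$ near the origin have infinite Hardy integral.

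The gradient part is equally problematic. The translates $v_n^1(\cdot+x_n^1)$ are bounded only in $H^s$ with $s<1$, so $\int\nabla v_n^1(\cdot+x_n^1)\cdot\nabla\overline{u}_1$ cannot be passed to the limit by weak convergence. Moreover, for $v_n^1\notin H^1$ the scalar product need not even split into separate gradient and Hardy integrals.

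The same issue reappears, in uniform form, in your claim $\cJ'(v_n^2)\to0$. To compare $\langle\overline{u}_1(\cdot-x_n^1),\varphi\rangle$ with $\cJ_0'(\overline{u}_1)$ you need $\sup_{\|\varphi\|\le1}\bigl|\int_{\R^N}\overline{u}_1(\cdot-x_n^1)\varphi\,|x|^{-2}\,dx\bigr|\to0$, which you do not address. Without these estimates, your norm splitting, the finiteness of the iteration, and \eqref{d}, \eqref{e} remain unproved.

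\paragraph{How the paper handles it.} The paper never uses $\cJ'$ of the remainders; it always tests $\cJ'(u_n)$ with $u_n\in H^1(\R^N)$. This also spares it the uniform Brezis--Lieb splitting of $f$ that you need for $\cJ'(v_n^1)\to0$. The argument has three parts:
\begin{enumerate}
\item[(i)] It discards $\overline{u}_0$ via $\langle\overline{u}_0,\overline{u}_i(\cdot-x_n^i)\rangle\to0$ \cite[Lemma 4.1]{BBDS}, so that $\langle u_n,\overline{u}_i(\cdot-x_n^i)\rangle$ can be written as a sum of integrals.
\item[(ii)] It handles the gradient/potential part by testing the limit equation, $\cJ_0'(\overline{u}_i)(u_n(\cdot+x_n^i)-\overline{u}_i)=0$, which is legitimate because $u_n\in H^1$. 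It then uses $\int f(x,\overline{u}_i)(u_n(\cdot+x_n^i)-\overline{u}_i)\,dx\to0$.
\item[(iii)] It kills $\int_{\R^N}\overline{u}_i(\cdot-x_n^i)u_n|x|^{-2}\,dx$ by splitting at $|x|=R$. Outside the ball the term is $O(R^{-2})$ by the $L^2$ bounds. Inside, Hölder with $|u_n|_{2N/(N-2s)}$, the exponential decay of $\overline{u}_i$ \cite[Theorem 2]{Pankov}, and $\frac{4N}{N+2s}<N$ make it vanish; this is where $s>\frac12$ is needed.
\end{enumerate}
The same Hölder estimate with $\varphi$ in place of $u_n$ would give you the uniform statement needed for $\cJ'(v_n^2)\to0$.

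\paragraph{An alternative repair.} A localization argument also works. It rests on the bound $\int_{|x|>R}|\nabla u|^2\,dx\lesssim_R\|u\|^2$ on $X^1(\R^N)$. Split off a cutoff of $\overline{u}_1(\cdot-x_n^1)$ near the origin, whose $H^1$-norm tends to $0$. Then expand the rest, which lives where $\nabla u$ is controlled, and use the equation for $\overline{u}_1$. Either way, some quantitative input beyond weak convergence is indispensable at this step.
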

\begin{proof}
The proof is divided into several steps.
\begin{enumerate}
    \item[\bf Step 1.] Since $(u_n)_n$ is bounded in $X^1 (\R^N)$, there exists $\overline{u}_0\in X^1(\R^N)$ such that, up to a subsequence, 
    $$
    u_n\weakto \overline{u}_0 \hbox{ in } X^1 (\R^N), \ u_n\to \overline{u}_0 \hbox{ in } L^p_{\loc} (\R^N), \hbox{ and } u_n\to \overline{u}_0 \hbox{ a.e. in }\R^N.
    $$
        Then for every $\varphi\in C^\infty_0(\R^N)$
    $$\cJ'(u_n)(\varphi)-\cJ'(\overline{u}_0)(\varphi)=\langle u_n-\overline{u}_0,\varphi \rangle-\int_{\R^N}\left(f(x,u_n)-f(x,\overline{u}_0)\right)\varphi\,dx\to 0,$$
    where we recall that $\langle\cdot,\cdot\rangle$ is the scalar product corresponding to \eqref{norm_formula}, $\langle u_n-\overline{u}_0,\varphi\rangle\to 0$ is a consequence of a weak convergence and the convergence $\int_{\R^N}\left(f(x,u_n)-f(x,\overline{u}_0)\right)\varphi\,dx\to 0$ follows from the Vitali convergence theorem. Therefore $\cJ'(\overline{u}_0) = 0$.
    \item[\bf Step 2.]{\em Let $v_n^1:=u_n-\overline{u}_0$. Suppose that 
    $$
    \sup_{z\in\R^N} \int_{B(z,1)}|v_n^1|^2\,dx\to 0.
    $$
    Then we shall show $u_n\to \overline{u}_0$ in $X^1 (\R^N)$ and the statement holds for $m=0$.}\\
    Note that
    \begin{align*} 
    \cJ'(u_n)(v_n^1) &=\langle u_n, v_n^1 \rangle-\int_{\R^N}f(x,u_n)v_n^1\,dx \\
    &= \| v_n^1 \|^2 + \langle \overline{u}_0, v_n^1 \rangle -\int_{\R^N}f(x,u_n)v_n^1\,dx.
    \end{align*}
    Therefore
    $$
    \|v_n^1\|^2=\cJ'(u_n)(v_n^1)-\langle \overline{u}_0, v_n^1 \rangle+\int_{\R^N}f(x,u_n)v_n^1\,dx.
    $$
    Since $0 = \cJ'(\overline{u}_0)(v_n^1) = \langle \overline{u}_0, v_n^1 \rangle - \int_{\R^N} f(x, \overline{u}_0) v_n^1 \, dx$, we obtain
     $$
    \|v_n^1\|^2=\cJ'(u_n)(v_n^1)+\int_{\R^N}(f(x,u_n)-f(x,\overline{u}_0))v_n^1\,dx.
    $$
    Since $(v_n^1)_n$ is bounded, 
    $$\|\cJ'(u_n)(v_n^1)\|\leq \|\cJ'(u_n)\|\|v_n^1\|\to 0.$$
    From the Lions' lemma we get that $v_n^1\to 0$ in $L^p(\R^N)$.
    Observe that for every $\varepsilon > 0$, there exists a positive constant $C_\varepsilon$ such that
    \begin{align*}
    \limsup_{n\to\infty} \left| \int_{\R^N} f(x, \overline{u}_0) v_n^1 \, dx \right| &\leq \limsup_{n\to\infty} \left( \varepsilon \int_{\R^N} |\overline{u}_0 v_n^1| \, dx + C_\varepsilon \int_{\R^N} |\overline{u}_0|^{p-1} |v_n^1| \, dx \right) \\
    &\leq \limsup_{n\to\infty} \left( \varepsilon |\overline{u}_0|_2 |v_n^1|_2 + C_\varepsilon |\overline{u}_0|_p^{p-1} |v_n^1|_p \right) \\
    &= \varepsilon |\overline{u}_0|_2 \limsup_{n\to\infty} |v_n^1|_2.
    \end{align*}
    Since $\varepsilon > 0$ was arbitrary, $\int_{\R^N} f(x, \overline{u}_0) v_n^1 \, dx \to 0$. Similarly $\int_{\R^N} f(x, u_n) v_n^1 \, dx \to 0$. Hence $v_n^1 \to 0$ in $X^1 (\R^N)$.

\item[] From now on, we fix $s \in \left( \max \left\{\frac12, \frac{(p-2)N}{2p} \right\}, 1 \right)$.
    
    \item[\bf Step 3.] {\em Suppose that $(z_n)_n\subset \Z^N$ such that
    \begin{equation}\label{non-vanishing}
    \liminf_{n\to\infty}\int_{B(z_n,1+\sqrt{N})}|v_n^1|^2\,dx>0.
    \end{equation}
    Then there is $\overline{u}\in H^1(\R^N)$ such that (up to a subsequence)}
    $$
    \hbox{\textit{(i)} } |z_n|\to\infty, \hbox{ \textit{(ii)} }u_n(\cdot+z_n)\weakto\overline{u}\neq 0 \mbox{ in } H^s(\R^N), \hbox{ \textit{(iii)} } \cJ_0'(\overline{u})=0.
    $$
    Condition (i) is standard. Since $u_n \in H^1 (\R^N) \subset X^1 (\R^N)$ and $H^1 (\R^N)$ is a translation-invariant subspace, $u_n (\cdot + z_n) \subset H^1 (\R^N) \subset X^1 (\R^N)$. Since $X^1 (\R^N) \subset H^s (\R^N)$, $(u_n)$ is bounded in $H^s (\R^N)$ and therefore $(u_n(\cdot + z_n))$ is also bounded in $H^s (\R^N)$. Hence, up to a subsequence, 
    \begin{align*}
    &u_n(\cdot + z_n) \weakto \overline{u} \hbox{ in }H^s (\R^N), \\ 
    &u_n(\cdot + z_n) \to \overline{u} \hbox{ in } L^p_{\loc} (\R^N), \\
    &u_n(x + z_n) \to \overline{u}(x) \hbox{ a.e. in } \R^N.
    \end{align*}
    From \eqref{non-vanishing} it is clear that $\overline{u} \neq 0$. Now we will show (iii). Denote $v_n:=u_n(\cdot+z_n)$.
    Take any $\varphi \in C_0^\infty (\R^N)$ and note that
    $$
    \|\cJ'(u_n)(\varphi(\cdot-z_n))\|\leq \|\cJ'(u_n)\|\|\varphi(\cdot-z_n)\| \to 0
    $$
    thanks to \cite[Lemma 4.3]{BBDS}. Therefore
    \begin{align*}
    o(1) &= \langle u_n, \varphi(\cdot - z_n) \rangle -\int_{\R^N}f(x,u_n)\varphi(\cdot - z_n)\,dx \\
    &= \int_{\R^N}\nabla v_n\nabla\varphi-\frac{(N-2)^2}{4|x+z_n|^2}v_n\varphi\,dx+\int_{\R^N} V(x)v_n\varphi\,dx-\int_{\R^N}f(x,v_n)\varphi\,dx \\
    &= -\int_{\R^N} v_n\Delta\varphi-\frac{(N-2)^2}{4|x+z_n|^2}v_n\varphi\,dx+\int_{\R^N} V(x)v_n\varphi\,dx-\int_{\R^N}f(x,v_n)\varphi\,dx.
    \end{align*}
    Note that $\kappa_n := \inf_{x \in \supp \varphi} |x+z_n|^2 \to \infty$ and therefore
    $$
    \left| \int_{\R^N} \frac{v_n \varphi}{|x+z_n|^2} \, dx \right| \leq \frac{1}{\kappa_n} |v_n|_2 |\varphi|_2 \to 0,
    $$
    since $(v_n)_n$ is bounded in $L^2 (\R^N)$. Now, using that $v_n \to \overline{u}$ in $L^2_{\loc} (\R^N)$ and in $L^p_{\loc} (\R^N)$, we get that
    \begin{align*}
    &\quad -\int_{\R^N} v_n\Delta\varphi-\frac{(N-2)^2}{4|x+z_n|^2}v_n\varphi\,dx+\int_{\R^N} V(x)v_n\varphi\,dx-\int_{\R^N}f(x,v_n)\varphi\,dx \\
    &\to -\int_{\R^N} \overline{u} \Delta\varphi\,dx+\int_{\R^N} V(x)\overline{u}\varphi\,dx-\int_{\R^N}f(x,\overline{u})\varphi\,dx.
    \end{align*}
    Therefore $\overline{u}$ satisfies
    \begin{equation}\label{eq:limiting}
    -\Delta \overline{u} + V(x) \overline{u} = f(x, \overline{u}) \quad \mbox{in } \cD'(\R^N)
    \end{equation}
    or, equivalently,
    $$
    -\Delta \overline{u}  = f(x, \overline{u}) - V(x) \overline{u} =: h(x) \quad \mbox{in } \cD'(\R^N).
    $$
    In view of Lemma \ref{lem:4.3}, to show that $\overline{u} \in H^1_{\loc} (\R^N)$, it is enough to verify that $h \in H^{-1} (\R^N)$. Observe that
    $$
    |h| \leq |V|_\infty |\overline{u}| + |\overline{u}| + C_1 |\overline{u}|^{p-1} \in L^2 (\R^N) + L^{\frac{p}{p-1}} (\R^N) \subset H^{-1}(\R^N).
    $$
    Hence, indeed $\overline{u} \in H^1_{\loc} (\R^N)$. To see that $\overline{u} \in H^1 (\R^N)$, we show a variant of the Caccioppoli inequality. Take any $R > 0$ and consider a function $\eta_R \in C_0^\infty (B(0,2R))$ such that $\eta_R \equiv 1$ on $B(0,R)$, $0 \leq \eta_R \leq 1$ and $| \nabla \eta_R | \leq \frac{c}{R}$ for some $c \geq 1$. Then, testing \eqref{eq:limiting} with $\eta_R^2 \overline{u}$ we get
    $$
    \int_{\R^N} \eta_R^2 |\nabla \overline{u}|^2 \, dx + 2 \int_{\R^N} \eta_R \overline{u} \nabla \overline{u} \nabla \eta_R \, dx + \int_{\R^N} V(x) \overline{u}^2 \eta_R^2 \, dx = \int_{\R^N} f(x,\overline{u}) \eta_R^2 \overline{u} \, dx.
    $$
    Using Young's inequality
    $$
    \left| 2 \int_{\R^N} \eta_R \overline{u} \nabla \overline{u} \nabla \eta_R \, dx \right| \leq \frac12 \int_{\R^N} \eta_R^2 |\nabla \overline{u}|^2 + \frac{2c^2}{R^2} \int_{B(0,2R)\setminus B(0,R)} \overline{u}^2 \, dx.
    $$
    Hence
    \begin{align*}
    \int_{B(0,R)} |\nabla \overline{u}|^2 \, dx &\leq \int_{\R^N} \eta_R^2 |\nabla \overline{u}|^2 \, dx \\
    &\lesssim \int_{\R^N} |f(x,\overline{u}) \overline{u}| \eta_R^2 \, dx + \int_{\R^N} |V(x)| \overline{u}^2 \eta_R^2 \, dx + \frac{1}{R^2} \int_{B(0,2R)\setminus B(0,R)} \overline{u}^2 \, dx \\
    &\leq \int_{\R^N} |f(x,\overline{u}) \overline{u}|  \, dx + |V|_\infty \int_{\R^N} \overline{u}^2  \, dx + \frac{1}{R^2} \int_{\R^N} \overline{u}^2 \, dx.
    \end{align*}
    From monotone convergence theorem, $\nabla \overline{u} \in L^2 (\R^N)$, and $\overline{u} \in H^1 (\R^N)$. In particular, $\cJ_0'(\overline{u}) = 0$.

    \item[\bf Step 4.]
    {\em
    Suppose there exist $m\geq 1$, $(x_n^i)_n \subset\Z^N, \overline{u_i}\in H^1(\R^N)$ for $1\leq i\leq m$ such that 
    \begin{align*}
        |x^i_n|\to\infty,|x^i_n-x^j_n|\to\infty\hbox{ for }i\neq j,\\
        u_n(\cdot+x^i_n)\to\overline{u_i}, \hbox{ for each } 1\leq i\leq m,\\
        \cJ'_0(\overline{u_i})=0 \hbox{ for each } 1\leq i\leq m.
    \end{align*}
    Then
    \begin{enumerate}
        \item if $\sup_{z\in\R^N}\int_{B(z,1)}|u_n-\overline{u}_0-\sum_{i=1}^m\overline{u_i}(\cdot-x^i_n)|^2\,dx\to 0$ as $n\to\infty,$ then
        $$
        \left\|u_n-\overline{u}_0-\sum_{i=1}^m\overline{u_i}(\cdot-x^i_n)\right\|\to 0;
        $$
      \item if there exist $(z_n)\subset\Z^N$ such that
        $$
        \liminf_{n\to\infty}\int_{B(z_n,1+\sqrt{N})}\left|u_n-\overline{u}_0-\sum_{i=1}^m\overline{u}_i(\cdot-x^i_n)\right|^2\,dx>0,
        $$
        then there is $\overline{u}_{m+1}$ such that up to a subsequence
        \begin{enumerate}
            \item $|z_n|\to\infty,|z_n^i-z_n^j|\to\infty,$ for $1\leq i\leq m,$
            \item $u_n(\cdot+z_n)\weakto \overline{u}_{m+1}\neq 0,$ in $H^s(\R^N).$
            \item $\cJ'_0(\overline{u})=0.$
        \end{enumerate}
    \end{enumerate}
    }
    Suppose that $\sup_{z\in\R^N}\int_{B(z,1)}|u_n-\overline{u}_0-\sum_{i=1}^m\overline{u_i}(\cdot-x^i_n)|^2\,dx\to 0$ as $n\to\infty.$ Let
    $$
    \xi_n=u_n-\overline{u}_0-\sum_{i=1}^m\overline{u_i}(\cdot-x^i_n) \in X^1(\R^N)
    $$
    and note that from the Lions lemma we get that $\xi_n\to 0$ in $L^r(\R^N)$, $2 < r <2^*$. We compute
    \begin{align*}
    \cJ'(u_n)(\xi_n)&=\|\xi_n\|^2 + \langle \overline{u}_0, \xi_n\rangle + \sum_{i=1}^m \langle \overline{u_i}(\cdot-x^i_n), \xi_n\rangle -\int_{\R^N}f(x,u_n)\xi_n\,dx,
     \end{align*}
     and since $\cJ'(\overline{u}_0)(\xi_n)=0$ we get that
     \begin{align*}
     \|\xi_n\|^2=\cJ'(u_n)(\xi_n)-\int_{\R^N}f(x,\overline{u}_0)\xi_n\,dx-\sum_{i=1}^m \langle \overline{u_i}(\cdot-x^i_n), \xi_n\rangle +\int_{\R^N}f(x,u_n)\xi_n\,dx.
     \end{align*}
     Observe that 
     \begin{align*} 
     \|\cJ'(u_n)(\xi_n)\| &\leq \|\cJ'(u_n)\|\|\xi_n\| \\
     &\leq \|\cJ'(u_n)\| \left( \|u_n\| + \|\overline{u}_0\| + \sum_{i=1}^m \| \overline{u_i}(\cdot-x^i_n) \| \right)\to 0
     \end{align*}
     thanks to \cite[Lemma 4.3]{BBDS}, and since $|\xi_n|_r\to 0$, 
     $$
     \int_{\R^N}f(x,\overline{u}_0)\xi_n\,dx \to 0, \quad \int_{\R^N}f(x,u_n)\xi_n\,dx \to 0.
     $$
     Hence
     \begin{align*}
     \|\xi_n\|^2 &= -\sum_{i=1}^m \langle \overline{u_i}(\cdot-x^i_n), \xi_n\rangle + o(1) \\
     &= -\sum_{i=1}^m \left( \langle \overline{u_i}(\cdot-x^i_n), u_n \rangle - \langle \overline{u_i}(\cdot-x^i_n), \overline{u}_0 \rangle - \sum_{j=1}^m \langle \overline{u_i}(\cdot-x^i_n), \overline{u_j}(\cdot-x^j_n) \rangle \right) + o(1).
     \end{align*}
     From \cite[Lemma 4.1]{BBDS}, $\langle \overline{u_i}(\cdot-x^i_n), \overline{u}_0 \rangle \to 0$. Thus
     \begin{align*}
     \|\xi_n\|^2 &= -\sum_{i=1}^m \left( \langle \overline{u_i}(\cdot-x^i_n), u_n \rangle  - \sum_{j=1}^m \langle \overline{u_i}(\cdot-x^i_n), \overline{u_j}(\cdot-x^j_n) \rangle \right) + o(1)\\
     &= - \sum_{i=1}^m \left( \langle \overline{u}_i, u_n (\cdot + x_n^i) \rangle - \sum_{j=1}^m \left\langle \overline{u}_i, \overline{u}_j \left( \cdot - (x_n^j - x_n^i) \right) \right\rangle \right) \\
     &\quad + \frac{(N-2)^2}{4} \sum_{i=1}^m \left( \int_{\R^N} \frac{\overline{u}_i (\cdot - x_n^i) u_n}{|x|^2} \, dx - \int_{\R^N} \frac{\overline{u}_i  u_n (\cdot + x_n^i)}{|x|^2} \, dx\right) \\
     &\quad + \frac{(N-2)^2}{4} \sum_{i=1}^m \sum_{j=1}^m \left( - \int_{\R^N} \frac{\overline{u_i}(\cdot-x^i_n)\overline{u_j}(\cdot-x^j_n)}{|x|^2} \, dx + \int_{\R^N} \frac{\overline{u}_i \overline{u}_j \left( \cdot - (x_n^j - x_n^i) \right)}{|x|^2} \, dx \right) + o(1).
     \end{align*}
     Again, \cite[Lemma 4.1]{BBDS} implies that $\left\langle \overline{u}_i, \overline{u}_j \left( \cdot - (x_n^j - x_n^i) \right) \right\rangle \to 0$ for $i \neq j$. Moreover, using \cite[Lemma 3.1]{GuoMederski} and H\"older inequality
     \begin{align*}
     \left| \int_{\R^N} \frac{\overline{u}_i \overline{u}_j \left( \cdot - (x_n^j - x_n^i) \right)}{|x|^2} \, dx \right| \leq \left( \int_{\R^N} \frac{|\overline{u}_i|^2}{|x|^2} \, dx \right)^{1/2} \left( \int_{\R^N} \frac{| \overline{u}_j \left( \cdot - (x_n^j - x_n^i) \right) |^2}{|x|^2} \, dx  \right)^{1/2} \to 0
     \end{align*}
     for $i \neq j$. Also, for any $i$, $j$, we have
     $$
      \left| \int_{\R^N} \frac{\overline{u_i}(\cdot-x^i_n)\overline{u_j}(\cdot-x^j_n)}{|x|^2} \, dx \right| \leq \left( \int_{\R^N} \frac{|\overline{u_i}(\cdot-x^i_n)|^2}{|x|^2} \, dx \right)^{1/2} \left( \int_{\R^N}\frac{|\overline{u_j}(\cdot-x^j_n)|^2}{|x|^2} \, dx \right)^{1/2} \to 0.
     $$
     Therefore
     \begin{align*}
     \|\xi_n\|^2 &= - \sum_{i=1}^m \left( \int_{\R^N} \nabla \overline{u}_i \nabla \left( u_n (\cdot + x_n^i) - \overline{u}_i \right) \, dx + \int_{\R^N}V(x) \overline{u}_i \left( u_n (\cdot + x_n^i) - \overline{u}_i \right) \, dx \right)  \\
     &\quad + \frac{(N-2)^2}{4} \sum_{i=1}^m \left( \int_{\R^N} \frac{\overline{u}_i (\cdot - x_n^i) u_n}{|x|^2} \, dx \right) + o(1).
     \end{align*}
    Using that $\cJ_0'(\overline{u}_i) \left( u_n (\cdot + x_n^i) - \overline{u}_i \right) = 0$, we obtain that
    \begin{align*}
    &\quad \int_{\R^N} \nabla \overline{u}_i \nabla \left( u_n (\cdot + x_n^i) - \overline{u}_i \right) \, dx + \int_{\R^N}V(x) \overline{u}_i \left( u_n (\cdot + x_n^i) - \overline{u}_i \right) \, dx \\
    &= \int_{\R^N} f(\overline{u}_i) \left( u_n (\cdot + x_n^i) - \overline{u}_i \right) \, dx \to 0
    \end{align*}
    thanks to Vitali's convergence theorem. Now, we consider the term
    $$
    \int_{\R^N} \frac{\overline{u}_i (\cdot - x_n^i) u_n}{|x|^2} \, dx = \int_{B(0,R)} \frac{\overline{u}_i (\cdot - x_n^i) u_n}{|x|^2} \, dx + \int_{\R^N \setminus B(0,R)} \frac{\overline{u}_i (\cdot - x_n^i) u_n}{|x|^2} \, dx =: I_1 + I_2,
    $$
    where $R > 0$ is fixed. Then
    \begin{align*}
    |I_2| \leq \frac{1}{R^2} |\overline{u}_i|_2 |u_n|_2 \lesssim \frac{1}{R^2},
    \end{align*}
    since $(u_n)_n$ is bounded in $L^2 (\R^N)$. To estimate $I_1$ we use H\"older's inequality, exponential decay of $\overline{u}_i$ (\cite[Theorem 2]{Pankov}), boundedness of $(u_n)_n$ in $L^{\frac{2N}{N-2s}}(\R^N)$ and the inequality $\frac{4N}{N+2s} < N$,
    \begin{align*}
    |I_1| \leq |u_n|_{\frac{2N}{N-2s}} \left( \int_{|x| < R} \frac{|\overline{u}_i (\cdot - x_n^i)|^{\frac{2N}{N+2s}}}{|x|^{\frac{4N}{N+2s}}} \, dx \right)^{\frac{N+2s}{2N}} \lesssim \sup_{|x| < R} e^{-\alpha |x-x_n^i|} \to 0,
    \end{align*}
    for some $\alpha > 0$. Hence, for any $R > 0$,
    $$
    \left| \int_{\R^N} \frac{\overline{u}_i (\cdot - x_n^i) u_n}{|x|^2} \, dx \right| \lesssim \frac{1}{R^2} + o(1)
    $$
    and therefore
    $$
    \int_{\R^N} \frac{\overline{u}_i (\cdot - x_n^i) u_n}{|x|^2} \, dx \to 0.
    $$
    It means that $\|\xi_n\|^2 \to 0$ and $\xi_n \to 0$ in $X^1 (\R^N)$, and the proof in this case is complete.
Now assume 
\begin{equation}\label{splitting:non-vanishing}
\liminf_{n\to\infty}\int_{B(z_n,1+\sqrt{N})}\left|u_n-\overline{u}_0-\sum_{i=1}^m\overline{u}_i(\cdot-x^i_n)\right|^2\,dx>0,    
\end{equation} for some $(z_n)_n\subset\Z^N.$ Then (i) is again standard. To prove (ii) we follow as in Step 3. Since $X^1 (\R^N) \subset H^s (\R^N)$, $(u_n)$ is bounded in $H^s (\R^N)$ and therefore $(u_n(\cdot + z_n))$ is also bounded in $H^s (\R^N)$. Hence, up to a subsequence, 
    \begin{align*}
    &u_n(\cdot + z_n) \weakto \overline{u}_{m+1} \hbox{ in }H^s (\R^N), \\ 
    &u_n(\cdot + z_n) \to \overline{u}_{m+1} \hbox{ in } L^p_{\loc} (\R^N), \\
    &u_n(x + z_n) \to \overline{u}_{m+1}(x) \hbox{ a.e. in } \R^N.
    \end{align*}
    From \eqref{splitting:non-vanishing} it is clear that $\overline{u}_{m+1} \neq 0$. Similarly as in Step 3 we show that $\overline{u}_{m+1} \in H^1 (\R^N)$ and $\cJ_0'(\overline{u}_{m+1}) = 0$.

\item[\bf Step 5.] 
Step 1. implies \eqref{a}.  If first condition from Step 2. holds then $u_n\to \overline{u}_0$ and the theorem is true for $m=0.$ On the other hand if 
$$
\liminf_{n\to\infty}\int_{B(y_n,1)}|v_n^1|^2\,dx>0
$$
for $(y_n)\subset\R^N.$ For each such translation we can find $z_n$ such that $B(y_n,1)\subset B(z_n,1+\sqrt{N}).$ Then
$$
\liminf_{n\to\infty}\int_{B(z_n,1+\sqrt{N})}|v_n^1|^2\,dx\geq\liminf_{n\to\infty}\int_{B(y_n,1)}|v_n^1|^2\,dx>0.
$$
So  in view of Step 3 one can find $\overline{u}$ such that (i)-(iii) hold. Let $y^1_n=z_n$ and $\overline{u}_1:=\overline{u}.$ If (a) from Step 4 holds with $m=1,$ then \eqref{c}, \eqref{d} are true. Otherwise we put $(y_n^2):=(z_n)$ and $\overline{u}_2:=\overline{u}$ and iterate Step 4. To complete the proof of \eqref{c}, \eqref{d} it is sufficient to show that this procedure will finish. 

For this purpose, similarly as in Step 4, we compute that
\begin{align*}
 \left\| u_n - \overline{u}_0 - \sum_{i=1}^m \overline{u}_i (\cdot - x_n^i) \right\|^2 &=  \| u_n\|^2 - 2 \langle u_n, \overline{u}_0 \rangle + \|\overline{u}_0\|^2 - 2 \sum_{i=1}^m \langle u_n, \overline{u}_i (\cdot - x_n^i) \rangle  \\
&\quad + 2 \sum_{i=1}^m \langle \overline{u}_0, \overline{u}_i (\cdot - x_n^i) \rangle  + \sum_{i=1}^m \sum_{j=1}^m \langle \overline{u}_i (\cdot - x_n^i), \overline{u}_j (\cdot - x_n^j) \rangle \\
&= \|u_n\|^2 - \|\overline{u}_0\|^2 \\
&\quad- 2 \sum_{i=1}^m \left( \int_{\R^N} f(\overline{u}_i) u_n(\cdot + x_n^1) \, dx - \frac{(N-2)^2}{4} \int_{\R^N} \frac{\overline{u}_i(\cdot - x_n^i) u_n}{|x|^2} \, dx \right) \\
&\quad + \sum_{i=1}^m \left( \|\overline{u}_i (\cdot - x_n^i) \|^2 + \sum_{j \neq i} \langle \overline{u}_i (\cdot - x_n^i), \overline{u}_j (\cdot - x_n^j) \rangle \right) + o(1) \\
&= \|u_n\|^2 - \|\overline{u}_0\|^2 - 2 \sum_{i=1}^m  \int_{\R^N} f(\overline{u}_i) \overline{u}_i \, dx  \\
&\quad + \sum_{i=1}^m \int_{\R^N} |\nabla \overline{u}_i|^2 + V(x) |\overline{u}_i|^2 \, dx  + o(1) \\
&= \|u_n\|^2 - \|\overline{u}_0\|^2 - \sum_{i=1}^m \int_{\R^N} |\nabla \overline{u}_i|^2 + V(x) |\overline{u}_i|^2 \, dx + o(1)
\end{align*}
for each $m \geq 0$. Now, note that $\cJ_0'(\overline{u}_i)(\overline{u}_i) = 0$ implies that there is $\rho > 0$ such that $\int_{\R^N} |\nabla \overline{u}_i|^2 + V(x) |\overline{u}_i|^2 \, dx \geq \rho$ for each $i$. Hence
$$
m \rho \leq \|u_n\|^2 - \|\overline{u}_0\|^2 + o(1),
$$
so after a finite number of steps, (a) in Step 4 will hold.

\item[\bf Step 6.] {\em We will show that \eqref{e} holds.}

\noindent Note that, calculations in Step 5 show that
$$
\|u_n\|^2 = \|\overline{u}_0\|^2 + \sum_{i=1}^m \int_{\R^N} |\nabla \overline{u}_i|^2 + V(x) |\overline{u}_i|^2 \, dx + o(1).
$$
Hence, it is sufficient to show 
$$
\int_{\R^N} F(x, u_n) \, dx \to \int_{\R^N} F(x, \overline{u}_0) \, dx + \sum_{i=1}^m \int_{\R^N} F(x, \overline{u}_i) \, dx.
$$
From a variant of Brezis-Lieb lemma \cite{BrezisLieb}, we have that
$$
\int_{\R^N} F(x,u_n-\overline{u}_0)+F(x,\overline{u}_0)-F(x,u_n)\,dx\to 0
$$
and, then --- iterating as in \cite[Theorem 4.1]{BieganowskiMederski} --- we obtain that 
$$
\int_{\R^N} F(x, u_n) \, dx \to \int_{\R^N} F(x, \overline{u}_0) \, dx + \sum_{i=1}^m \int_{\R^N} F(x, \overline{u}_i) \, dx.
$$
\end{enumerate}
\end{proof}

\section{Proof of Theorem}

\begin{proof}
Let $(u_n)_n$ be a Cerami sequence for $\cJ$ from Theorem \ref{Thm:Crtical_point}. From the density of $H^1 (\R^N)$ in $X^1 (\R^N)$ we may assume that $(u_n)_n \subset H^1 (\R^N)$.

Denote $c := \inf_\cN \cJ$ and $c_0 := \inf_{\cN_0} \cJ_0$. Take $u_0 \in H^1 (\R^N)$ such that $\cJ_0(u_0) = c_0$, see \cite{SzulkinWeth, dePaiva}. Then. there is $t > 0$ such that $tu_0 \in \cN$. Thus, we compute
$$
c_0 = \cJ_0(u_0) \geq \cJ_0(t u_0) > \cJ (t u_0) \geq c.
$$

Then, from Lemma \ref{cerami-bounded}, $(u_n)_n$ is bounded in $X^1 (\R^N)$. From the profile decomposition we have
$$
\cJ(u_n) \to \cJ(\overline{u}_0) + \sum_{i=1}^m \cJ_0(\overline{u}_i) \geq \cJ(\overline{u}_0) + m c_0.
$$
Hence
$$
c \geq \cJ(\overline{u}_0) + m c_0.
$$
Since $c_0 > c$, we obtain $m = 0$. Hence $\cJ(\overline{u}_0) = c$ and $\overline{u}_0$ is the ground state solution to \eqref{eq:main}.
\end{proof}

\section*{Acknowledgements}

We would like to thank the anonymous reviewers for their comments, which helped us improve the quality of the paper.

Bartosz Bieganowski and Adam Konysz were partly supported by the National Science Centre, Poland (Grant No. 2022/47/D/ST1/00487).

\end{document}